\theoremstyle{plain}
\newtheorem{thm}{Theorem}[section]
\newtheorem{prop}[thm]{Proposition}
\newtheorem{corollary}[thm]{Corollary}
\newtheorem{lemma}[thm]{Lemma}
\theoremstyle{definition}
\newtheorem{remark}[thm]{Remark}
\numberwithin{equation}{section}
\newcommand{\I}{\mathds{1}}
\newcommand\R{\mathds{R}}
\newcommand\E{\mathds{E}}
\renewcommand\P{\mathds{P}}
\newcommand\N{\mathds{N}}
\newcommand\D{\mathds{D}}
\newcommand\FF{\mathcal{F}}
\newcommand\wh{\widehat}
\newcommand\wt{\widetilde}
\newcommand\ov{\overline}
\begin{document}

\title{\Large\bfseries A distributional equality for suprema of spectrally positive L\'evy processes}

\author{\itshape
    Ivana Ge\v{c}ek Tu{\dj}en
    \and
    \itshape Zoran Vondra\v{c}ek
}

\date{}

\maketitle

\begin{abstract}\noindent
Let $Y$ be a spectrally positive L\'evy process with $\E Y_1<0$, $C$ an independent subordinator with finite expectation, and $X=Y+C$. A curious distributional equality proved in \cite{HPSV04a} states that if $\E X_1<0$, then $\sup_{0\le t <\infty}Y_t$ and the supremum of $X$ just before the first time its new supremum is reached by a jump of $C$ have the same distribution. In this paper we give an alternative proof of an extension of this result and offer an explanation why it is true.

\medskip
\noindent
\textit{2010 Mathematics Subject Classification:} Primary 60G51; Secondary 60J75.

\noindent
\textit{Keywords and phrases:} Spectrally positive L\'evy process, subordinator, supremum, distributional equality

\end{abstract}

\section{Introduction}\label{sec-1}

Let $Y=(Y_t)_{t\ge 0}$ be a one-dimensional spectrally positive L\'evy process such that $Y_1$ is integrable and $\E Y_1<0$. By the law of large numbers, cf.~\cite[Theorem 36.5]{Sat}, $\lim_{t\to \infty}Y_t=-\infty$ a.s., and consequently $\ov{Y}_{\infty}:=\sup_{t\ge 0}Y_t <\infty$. Assume further that $C=(C_t)_{t\ge 0}$ is a subordinator without drift independent of $Y$ with jumps denoted by $\Delta C_t=C_t-C_{t-}$. By setting  $X_t:=Y_t+C_t$ we see that $X=(X_t)_{t\ge 0}$ is again a spectrally positive L\'evy process. Its supremum process is defined by $\ov{X}_t:=\sup_{0\le s \le t}X_s$. Let
\begin{equation}\label{e:def-sigma}
\sigma:=\inf\{t>0:\, \Delta C_t>\ov{X}_{t-}-X_{t-}\}
\end{equation}
be the first time the supremum of $X$ is reached by a jump of the subordinator $C$. As a consequence of spectral positivity of $Y$ it holds that $\sigma >0$ a.s.~(cf.~\cite[Theorem 4.1]{HPSV04a} and \cite[Theorem 2.1]{SV08}). Assume further that $C_1$ has finite expectation satisfying $\E X_1=\E Y_1+\E C_1 <0$. Then the following distributional equality was proved in \cite[Corollary 4.10]{HPSV04a}:
\begin{equation}\label{e:dist-eq}
\sup_{0\le t <\infty} Y_t \stackrel{d}{=} \sup_{0\le t<\sigma} X_t\, .
\end{equation}
Note that the right-hand side is the supremum of the process $X=Y+C$ just before the first time the new supremum of $X$ is reached by a jump of the subordinator $C$. As such, one might expect that its distribution depends on the subordinator $C$. A curious fact about \eqref{e:dist-eq} is that the right-hand side is independent of $C$ (as long as $\E Y_1+\E C_1<0$). The proof given in \cite{HPSV04a} does not reveal why this is so -- the equality \eqref{e:dist-eq} was obtained by deriving the Pollaczek-Khintchine formula for the overall supremum $\sup_{0\le t<\infty}X_t$ in two different ways and by equating factors in the Laplace transforms. The goal of this paper is to give an alternative proof of (a slight extension) of \eqref{e:dist-eq} which hopefully sheds more light on why this equality holds true and what are the limitations of further extensions of the formula. More precisely, we will prove the following result.

\begin{thm}\label{t:main}
Let $Y=(Y_t)_{t\ge 0}$ be a spectrally positive L\'evy process such that $Y_1$ is integrable and $\E Y_1<0$, let $C=(C_t)_{t\ge 0}$ be a subordinator without a drift independent of $Y$ such that $C_1$ is integrable and $\E Y_1 +\E C_1 \le 0$. If $X=Y+C$ and $\sigma$ is defined by \eqref{e:def-sigma}, then the distributional equality \eqref{e:dist-eq} holds true.
\end{thm}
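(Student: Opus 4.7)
My plan is to prove Theorem~\ref{t:main} by reducing to compound-Poisson $C$, exploiting a strong-Markov/renewal decomposition of $\overline{X}_\infty$, and matching Laplace transforms via the Pollaczek--Khintchine formula.

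First I would reduce to the case where $C$ is compound Poisson. For a general $C$ with $\E C_1<\infty$, approximate it by $C^{(\varepsilon)}$ whose L\'evy measure is $\nu_C\mathbf{1}_{(\varepsilon,\infty)}$, and take $\varepsilon\downarrow 0$: both sides of \eqref{e:dist-eq} are continuous in $\nu_C$ under the natural monotone limit, and $\overline{Y}_\infty$ is independent of $C$, so it suffices to treat compound-Poisson $C$ of rate $\lambda$ with jump law $\mu$. Enumerating the $C$-jump times as $T_1<T_2<\cdots$ (exponential interarrivals) with i.i.d.\ sizes $V_i\sim\mu$, the strong Markov property of $Y$ gives that, between $T_{i-1}$ and $T_i$, $X_\cdot-X_{T_{i-1}}$ is an independent copy of $Y$ run for an exponential time. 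Writing $R_i=\overline{X}_{T_i}-X_{T_i}$ for the post-$C$-jump gap, $\Delta_i$ for the running supremum of the $i$-th copy, and $\Delta M_i=(\Delta_i-R_{i-1})^+$ for the sup-increment on the $i$-th epoch, one obtains a Markov chain for $R_i$ and the identity $\overline{X}_{\sigma-}=\sum_{i=1}^{I}\Delta M_i$, where $I=\inf\{i:V_i>R_{T_i-}\}$.

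Next, the strong Markov property of $X$ at $\sigma$ gives, on $\{\sigma<\infty\}$, the renewal identity $\overline{X}_\infty=\overline{X}_{\sigma-}+O_\sigma+\overline{X}'_\infty$, with $O_\sigma=\Delta C_\sigma-R_{\sigma-}>0$ the overshoot and $\overline{X}'_\infty$ an independent copy of $\overline{X}_\infty$. Taking Laplace transforms, in the case $\E X_1<0$, and using the Pollaczek--Khintchine formula $\E e^{-\theta\overline{X}_\infty}=-\theta\,\E X_1/\Psi_X(\theta)$ together with $\Psi_X=\Psi_Y+\Psi_C$ and a Poisson-point-process compensation identity expressing $\E[e^{-\theta X_\sigma}\mathbf{1}_{\sigma<\infty}]$ as an integral against $\overline{\nu}_C(r)$ of quantities involving the joint law of $(\overline{X}_{t-},R_{t-})$, I would isolate $\E e^{-\theta\overline{X}_{\sigma-}}$ and show it equals $-\theta\,\E Y_1/\Psi_Y(\theta)=\E e^{-\theta\overline{Y}_\infty}$. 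The boundary case $\E X_1=0$ would then follow by continuity in the family $Y+(1-\eta)C$, $\eta\downarrow 0$.

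The main obstacle is the previous step: producing the algebraic cancellation that eliminates the $C$-dependence from $\E e^{-\theta\overline{X}_{\sigma-}}$. The structural reason, and the explanation the paper promises to give, should be that before $\sigma$ the supremum $\overline{X}$ can increase only by continuous motion of $X$ or by a $Y$-jump of $X$ --- any $C$-jump that would push $X$ strictly above its previous supremum is precisely a $\sigma$-triggering jump --- so the ``effective'' supremum dynamics up to $\sigma$ are governed entirely by $Y$. Additional care will be needed when $\P(\sigma=\infty)>0$ (which can occur even under $\E X_1<0$, for example when $\nu_C$ has bounded support), in which case $\overline{X}_{\sigma-}=\overline{X}_\infty$ on that event and the renewal decomposition must be refined accordingly.
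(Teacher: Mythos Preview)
Your outline is essentially the original argument from \cite{HPSV04a} that the present paper sets out to \emph{replace}: derive Pollaczek--Khintchine for $\overline{X}_\infty$, couple it with the renewal identity $\overline{X}_\infty=\overline{X}_{\sigma-}+O_\sigma+\overline{X}'_\infty$, and match Laplace factors. The step you yourself flag as ``the main obstacle''---isolating $\E e^{-\theta\overline{X}_{\sigma-}}$ and producing the cancellation of the $C$-dependence---is the entire content of the proof, and you have not carried it out. In particular, $\overline{X}_{\sigma-}$ and $O_\sigma$ are not independent, so the Laplace transform of their sum does not factor; to make the matching work you need the joint Laplace transform of $(\overline{X}_{\sigma-},O_\sigma)$ (or equivalently the compensation computation you allude to), and that is precisely the computation you leave undone. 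Your approximation step ``both sides of \eqref{e:dist-eq} are continuous in $\nu_C$ under the natural monotone limit'' is also asserted rather than proved; the paper devotes a full section (Lemmas~\ref{l:approx-CPP}, \ref{l:approx-general} and Propositions~\ref{p:approx-1}, \ref{p:approx-2}) to pathwise Skorohod-space arguments establishing exactly this continuity.

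The paper's route is genuinely different and realizes the ``structural reason'' you identify in your last paragraph. In the compound-Poisson case $Y_t=-ct+Z_t$, both $\overline{Y}_\infty$ and $\overline{X}_{\sigma-}$ are geometric sums of i.i.d.\ overshoots; the point is to show that the geometric parameter and the overshoot law coincide. Via Tak\'acs' formula \eqref{e:takacs} and the fluctuation identity \eqref{e:measures}, Proposition~\ref{p:crossing-0} computes explicitly
\[
\P\big(\tau_0^X<\infty,\ X_{\tau_0^X}\in dx,\ \Delta X_{\tau_0^X}=\Delta Z_{\tau_0^X}\big)
=\frac{1}{c}\Big(\nu_Z(x,\infty)-\Phi_X(0)e^{\Phi_X(0)x}\!\!\int_x^\infty e^{-\Phi_X(0)u}\nu_Z(u,\infty)\,du\Big)dx,
\]
and the same formula with $\Phi_Y(0)$ holds for $Y$. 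Under $\E X_1\le 0$ one has $\Phi_X(0)=\Phi_Y(0)=0$, so both reduce to $\tfrac{1}{c}\nu_Z(x,\infty)\,dx$ with total mass $\mu_Z/c$; this is the cancellation you were seeking, and it is visible rather than buried in Laplace algebra. The general case is then obtained by the approximation propositions mentioned above. This route also immediately explains why the result fails when $\E X_1>0$: then $\Phi_X(0)>0=\Phi_Y(0)$ and the two geometric parameters differ (Proposition~\ref{p:distributions-ne}).
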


Note that we extend \cite[Corollary 4.10]{HPSV04a} to the case when $\E X_1=0$. On the other hand, we also show that when $\E X_1 >0$ \eqref{e:dist-eq} is no longer valid.

The proof of Theorem \ref{t:main} is split into two parts. The first part deals with the case when both $Y$ and $C$ are compound Poisson processes. Then $Y$ can be written as $Y_t= -ct+Z_t $ where $Z=(Z_t)_{t\ge 0}$ is a subordinator with finite L\'evy measure such that $\E Z_1 <c$. Similarly, $X_t=-ct + Z_t +C_t$. Then $\sup_{0\le t<\infty}Y_t$ can be written as a sum of geometrically many overshoots leading to a new supremum. One minus the parameter of this geometric distribution is the probability that $Y$ goes above  level zero, while the distribution of each overshoot is equal to the distribution of the overshoot of level zero (conditional on the fact that level zero is reached). On the other hand, $\sup_{0\le t<\sigma}X_t$ is also a sum of geometrically many overshoots leading to a new supremum reached by a jump of $Z$. One minus the parameter of the geometric distribution is equal to the probability that $X$ goes over zero by a jump of $Z$, while each of the overshoots has the same distribution as the distribution of the overshoot of level zero (conditional on the fact that level zero is reached by a jump of $Z$). An explanation that the corresponding quantities in these two situations are equal relies on Tak\'acs' formula \cite[p.37]{Tak} and a fluctuation identity for spectrally one-sided L\'evy processes \cite[Corollary VII 3]{Ber}. In this part we use and extend the arguments from \cite[Section 2]{HPSV04b}. We further show that if $\E X_1>0$, then  $\sup_{0\le t<\infty}Y_t$ and $\sup_{0\le t<\sigma}X_t$ have different distributions. This part of the proof is explained in Section \ref{sec-2} of the paper.

In Section \ref{sec-3} we give two approximation results. The first one roughly says that if a L\'evy process $Y$ is a distributional limit of a sequence $(Y^{(n)})_{n\ge 1}$ of L\'evy processes, $C$ is a subordinator with finite L\'evy measure, and if \eqref{e:dist-eq} holds for approximating processes, then it also holds for the limiting process, see Proposition \ref{p:approx-1}. The second result is of a similar nature, only the subordinator $C$ with infinite L\'evy measure is approximated by a sequence of subordinators of finite L\'evy measures, see Proposition \ref{p:approx-2}. Both results rely on certain approximations of functions in Skorohod's space $\D$,  see Lemmas \ref{l:approx-CPP} and \ref{l:approx-general}. Proofs of the lemmas are quite technical and the reader may want to continue with the general case before delving into proofs.

In Section \ref{sec-4} we check that conditions required in approximation results are valid and give the proof of Theorem \ref{t:main} for the general case.  We finish the paper with a discussion on how essential is the assumption on spectral positivity of $Y$ for validity of Theorem \ref{t:main} and show that the theorem fails in case $Y$ does  not creep downwards (i.e.~it continuously crosses every level from above with probability zero, cf.~\cite[p.174]{Ber} for details).

\section{ The case of compound Poisson process}\label{sec-2}

Let $Z=(Z_t)_{t\ge 0}$ be a subordinator with no drift,  finite L\'evy measure $\nu_Z$ and the Laplace exponent $\phi_Z$ given by
$$
\phi_Z(\lambda)=\int_{(0,\infty)}(1-e^{-\lambda x})\, \nu_Z(dx)\, .
$$
Assume further that $\mu_Z:=\E Z_1=\int_0^{\infty}\nu_Z(x,\infty)\, dx <\infty$. Let $C=(C_t)_{t\ge 0}$ be another subordinator, independent of $Z$, with no drift, finite L\'evy measure $\nu_C$ and the Laplace exponent $\phi_C$. Assume also that $\mu_C:=\E C_1 < \infty$. Let $(\FF_t)_{t\ge 0}$ be the natural filtration generated by subordinators $Z$ and $C$, augmented in the usual way.
By independence, $Z$ and $C$ do not jump at the same time. We will tacitly use this fact throughout the paper.

Let $c>0$. The process $X=(X_t)_{t\ge 0}$ defined by $X_t:=-ct+Z_t+C_t$ is a spectrally positive L\'evy process such that $X_1$ is integrable and $\E X_1=-c+\mu_Z+\mu_C$. Note that $Z$ and $C$ have symmetric roles in $X$. The process $-X$ is a spectrally negative L\'evy process with the Laplace exponent $\psi=\psi_X$ defined by
$$
\E\left[e^{\lambda(-X_t)}\right]=e^{t \psi_X(\lambda)}\, ,\qquad \lambda >0\, ,
$$
and $\psi_X:[0,\infty)\to (-\infty,\infty)$ is strictly convex (cf.~\cite[p.188]{Ber}). Let $\Phi_X(0)$ be the largest solution of the equation $\psi_X(\lambda)=0$. Then $\psi_X:[ \Phi_X(0), \infty)\to [0,\infty)$ is a bijection, and its inverse is denoted by $\Phi_X$. Note that $\Phi_X(0)=0$ if and only if $\E X_1\le 0$. For $y\le 0$ let
$$
T_y^X:=\inf\{t\ge 0:\, X_t<y\}=\inf\{t\ge 0:\, X_t=y\}\, ,
$$
where the equality follows from the fact that $X$ is spectrally positive. We deduce from \cite[Theorem VII 1]{Ber} that $(T_y^X)_{y\le 0}$ is a (possibly killed) subordinator with the Laplace exponent $\Phi_X$. In particular,
\begin{equation}\label{e:hitting-y}
\P(T_y^X<\infty)=e^{\Phi_X(0)y}\, ,\quad y\le 0\, .
\end{equation}

Let $\tau_0^X:=\inf\{t>0:\, X_t>0\}$ be the first passage time of $X$ above the level zero. Note that at $\tau_0^X$ the process $X$ makes a jump over zero, and that either $Z$ or $C$ can make this jump. In the next proposition we compute the probability that the jump was made by $Z$ and the distribution of the overshoot. The same result was proved in \cite[Theorem 2.2 (a)]{HPSV04b} in case when $\E X_1 <0$.  A  related result is given in \cite[Theorem VII 17 (ii)]{Ber}.

\begin{prop}\label{p:crossing-0} Let $X_0=0$. For $y\le 0$ and $x>0$,
\begin{equation}\label{e:crossing-0-1}
\P\big (\tau_0^X<\infty, X_{\tau_0^X-}\in dy, X_{\tau_0^X}\in dx, \Delta X_{\tau_0^X}=\Delta Z_{\tau_0^X}\big)= \frac{1}{c} \, e^{\Phi_X(0)y} \nu_Z(-y+dx)\, dy\, .
\end{equation}
Consequently,
\begin{eqnarray}\label{e:crossing-0-2}
\lefteqn{\P\big (\tau_0^X<\infty,  X_{\tau_0^X}\in dx, \Delta X_{\tau_0^X}=\Delta Z_{\tau_0^X}\big)}\nonumber \\
&=&\frac{1}{c} \, \left( \nu_Z(x,\infty)-\Phi_X(0)e^{\Phi_X(0)x}\int_x^{\infty}e^{-\Phi_X(0)u}\nu_Z(u,\infty)\, du\right)\, dx
\end{eqnarray}
and
\begin{equation}\label{e:crossing-0-3}
\P\big (\tau_0^X<\infty, \Delta X_{\tau_0^X}=\Delta Z_{\tau_0^X}\big)= \frac{1}{c} \int_0^{\infty}e^{-\Phi_X(0)u} \nu_Z(u,\infty)\, du\, .
\end{equation}
\end{prop}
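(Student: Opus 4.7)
My plan is to prove \eqref{e:crossing-0-1} via the compensation formula applied to the jumps of $Z$, and then to deduce \eqref{e:crossing-0-2} and \eqref{e:crossing-0-3} by integration.

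\textbf{Step 1 (compensation formula).} The event in \eqref{e:crossing-0-1} occurs, up to null sets, exactly when at some time $t>0$ one has $\tau_0^X\geq t$, $X_{t-}\in dy$ with $y\leq 0$, and a $Z$-jump of size $s\in -y+dx$ (landing $X$ at $x>0$) takes place at time $t$. Since the jumps of $Z$ form a Poisson point process with intensity $\nu_Z(ds)\,dt$ independent of $\FF_{t-}$, applying the compensation formula to the predictable integrand $\I_{\{\tau_0^X\geq t,\,X_{t-}\in dy\}}\,\I_{\{y+s\in dx\}}$ and using that a.s.\ $X_{t-}=X_t$ for Lebesgue-a.e.\ $t$ gives
\[
\P\bigl(\tau_0^X<\infty,\,X_{\tau_0^X-}\in dy,\,X_{\tau_0^X}\in dx,\,\Delta X_{\tau_0^X}=\Delta Z_{\tau_0^X}\bigr)=U^X(dy)\,\nu_Z(-y+dx),
\]
where $U^X(dy):=\int_0^{\infty}\P(X_t\in dy,\,\tau_0^X>t)\,dt$ is the $0$-potential measure of $X$ killed at $\tau_0^X$, supported in $(-\infty,0]$.

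\textbf{Step 2 (potential of $X$).} I will identify $U^X(dy)=\tfrac{1}{c}e^{\Phi_X(0)y}\,dy$. Passing to $-X$, which is spectrally negative of finite variation with drift $c>0$ and Laplace exponent $\psi_X$, the associated $W$-scale function satisfies $W(0+)=1/c$. Starting from the two-sided exit resolvent formula for spectrally negative L\'evy processes, letting the upper barrier tend to $\infty$, and using the standard asymptotic $W(a-z)/W(a)\to e^{-\Phi_X(0)z}$ as $a\to\infty$, the $0$-potential density of $-X$ killed on entering $(-\infty,0)$ started from $0$ equals $W(0+)\,e^{-\Phi_X(0)z}=\tfrac{1}{c}e^{-\Phi_X(0)z}$ for $z\geq 0$; this is \cite[Corollary VII 3]{Ber}. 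Reflecting via $y=-z$ yields the claim and, combined with Step 1, proves \eqref{e:crossing-0-1}.

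\textbf{Step 3 (integration).} Integrating \eqref{e:crossing-0-1} over $y\in(-\infty,0]$, substituting $u=-y$, and applying Fubini with $s=u+x$, the resulting $dx$-density is $\tfrac{1}{c}e^{\Phi_X(0)x}\int_x^{\infty}e^{-\Phi_X(0)s}\,\nu_Z(ds)$. A single integration by parts based on $\nu_Z(ds)=-d\,\nu_Z(s,\infty)$ converts this into the expression in \eqref{e:crossing-0-2}. For \eqref{e:crossing-0-3} I integrate \eqref{e:crossing-0-1} over both $y$ and $x$, using $\int_0^{\infty}\nu_Z(-y+dx)=\nu_Z(-y,\infty)$ and the same substitution $u=-y$. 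The main obstacle is Step 2, where the scale-function asymptotic has to hold uniformly across all regimes of $\E X_1$; the compensation identity in Step 1 and the algebra in Step 3 are standard.
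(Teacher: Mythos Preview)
Your proof is correct, and Steps~1 and~3 coincide with the paper's argument. The genuine difference is Step~2. The paper does \emph{not} compute the potential $U^X(dy)$ via scale functions; instead it combines Tak\'acs' ballot-type formula
\[
\P\big(\ov{X}_t\le 0\,\big|\,X_t\big)=\Big(\frac{-X_t}{ct}\Big)^+
\]
with the Kendall-type identity $t\,\P(T_y^X\in dt)\,dy=(-y)\,\P(X_t\in dy)\,dt$ (this identity is what \cite[Corollary~VII~3]{Ber} actually records). Chaining these gives
\[
\int_0^\infty \P(X_t\in dy,\ \ov{X}_t\le 0)\,dt
=\int_0^\infty \frac{-y}{ct}\,\P(X_t\in dy)\,dt
=\frac{1}{c}\,\P(T_y^X<\infty)\,dy
=\frac{1}{c}\,e^{\Phi_X(0)y}\,dy,
\]
without any appeal to $W$ or to the two-sided resolvent. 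Your scale-function route is perfectly valid and arguably the more ``modern'' way to reach the same density; the paper's route has the advantage that the factor $1/c$ appears transparently from Tak\'acs' formula and no asymptotic of $W$ is needed (your own caveat about the oscillating case $\E X_1=0$ disappears in the paper's approach, and in yours it can be sidestepped by invoking the one-barrier resolvent identity $r(0,z)=e^{-\Phi(0)z}W(0)-W(-z)$ directly rather than via a limit). One small correction: your citation of \cite[Corollary~VII~3]{Ber} points to the Kendall identity, not to the resolvent formula you are using; the resolvent in terms of $W$ is a different result in Bertoin's Chapter~VII.
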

\begin{proof}
The proof relies on the following two results: The first one is (a version of) the remarkable formula due to Tak\'acs, see \cite[p.37]{Tak} which states that
\begin{equation}\label{e:takacs}
\P\left(\sup_{0\le s\le t} X_s >0 \, \Big | \, X_t\right)=1-\left(-\frac{X_t}{ct}\right)^+\, .
\end{equation}
The second result is the another remarkable identity  valid for one-sided L\'evy process, see  \cite[Corollary VII 3]{Ber}, which in our case says that
\begin{equation}\label{e:measures}
t \P(T_y^X\in dt)\, dy = (-y)\P(X_t \in dy)\, dt \, \quad \text{as measures on } [0,\infty)\times (-\infty, 0]\, .
\end{equation}
Recall that $\ov{X}_t=\sup_{0\le s \le t}X_s$. By use of the compensation formula applied to the two-dimensional Poisson point process $(\Delta Z_t, \Delta C_t)_{t\ge 0}$ with the characteristic measure concentrated on positive coordinate axes, we see that
\begin{eqnarray*}
\lefteqn{\P\big (\tau_0^X<\infty, X_{\tau_0^X-}\in dy, X_{\tau_0^X}\in dx, \Delta X_{\tau_0^X}=\Delta Z_{\tau_0^X}\big)}\\
&=&  \E\left(\sum_{0<t<\infty} \I_{(X_{t-}\in dy, \ov{X}_{t-}\le 0)} \I_{(\Delta Z_t\in -y+dx)}\right) \\
&=& \E\left(\int_0^{\infty} dt\, \I_{(X_{t-}\in dy, \ov{X}_{t-}\le 0)}\nu_Z(-y+dx)\right)\\
&=& \int_0^{\infty}dt\, \P(X_t\in dy, \ov{X}_t\le 0)\nu_Z(-y+dx)\, .
\end{eqnarray*}
The compensation formula was used in the second equality, while the last equality follows from the fact that $X_{t-}=X_t$ for a.e.~$t$.
By using first \eqref{e:takacs}, then \eqref{e:measures} and finally \eqref{e:hitting-y} we see that the last line is equal to
\begin{eqnarray*}
\lefteqn{\int_0^{\infty}dt\, \P(X_t\in dy) \frac{-y}{ct} \nu_Z(-y+dx)=\frac{1}{c} \int_0^{\infty}
\P(T_y^X\in dt)\, dy \, \nu_Z(-y+dx)} \\
&=&\frac{1}{c}\P(T_y^X<\infty)\nu_Z(-y+dx)\, dy =\frac{1}{c} \, e^{\Phi_X(0)y}  \nu_Z(-y+dx)\, dy\, .
\end{eqnarray*}
This proves \eqref{e:crossing-0-1}. By integrating over $y$ we obtain \eqref{e:crossing-0-2}. Indeed, define the measure $\rho$ on $(0,\infty)$ by
$$
\rho(A):=\frac{1}{c}\int_{-\infty}^0 e^{\Phi_X(0)y}\nu_Z(-y+A)\, dy=\frac{1}{c}\int_0^{\infty}e^{-\Phi_X(0)u}\nu_Z(u+A)\, du\, ,\quad A\subset (0,\infty)\, .
$$
When $A=(x,\infty)$, $x>0$, we get
\begin{eqnarray*}
\rho(x,\infty)&=&\frac1c \int_0^{\infty} e^{-\Phi_X(0)u} \nu(u+x,\infty)\, du\\
&=&\frac1c\,  e^{\Phi_X(0) x}\int_x^{\infty}e^{-\Phi_X(0)u}\nu_Z(u,\infty)\, du\, .
\end{eqnarray*}
It follows that the measure $\rho$ has a density $\rho(x)=-\frac{d}{dx}\rho(x,\infty)$. By differentiating we get that
$$
\rho(x)=\frac1c \nu_Z(x,\infty)-\Phi_X(0)\rho(x,\infty)
$$
which proves \eqref{e:crossing-0-2}. Finally,
$$
\P\big (\tau_0^X<\infty, \Delta X_{\tau_0^X}=\Delta Z_{\tau_0^X}\big)=\rho(0,\infty)=\frac{1}{c} \int_0^{\infty}e^{-\Phi_X(0)u} \nu_Z(u,\infty)\, du\, ,
$$
proving \eqref{e:crossing-0-3}.
\end{proof}

By applying Proposition \ref{p:crossing-0} to the process $Y_t:=-ct+Z_t$ the following identities follow:
\begin{align}
& \P\big (\tau_0^Y<\infty, Y_{\tau_0^Y-}\in dy, Y_{\tau_0^Y}\in dx \big)= \frac{1}{c} \, e^{\Phi_Y(0)y} \nu_Z(-y+dx)\, dy \label{e:crossing-0-1y}\\
& \P\big (\tau_0^Y<\infty,  Y_{\tau_0^Y}\in dx \big)
=\frac{1}{c} \, \left( \nu_Z(x,\infty)-\Phi_Y(0)e^{\Phi_Y(0)x}\int_x^{\infty}e^{-\Phi_Y(0)u}\nu_Z(u,\infty)\, du\right)\, dx \label{e:crossing-0-2y}\\
& \P\big (\tau_0^Y<\infty\big)= \frac{1}{c} \int_0^{\infty}e^{-\Phi_Y(0)y} \nu_Z(y,\infty)\, dy\, .\label{e:crossing-0-3y}
\end{align}

\begin{corollary}\label{c:crossing-0}
Suppose that $\mu_Z+\mu_C\le c$. Then
\begin{equation}\label{e:crossing-0-1e}
\P\big (\tau_0^X<\infty,  X_{\tau_0^X}\in dx, \Delta X_{\tau_0^X}=\Delta Z_{\tau_0^X}\big)=\P\big (\tau_0^Y<\infty,  Y_{\tau_0^Y}\in dx \big)=\frac{1}{c}\, \nu_Z(x,\infty)\, dx\, ,
\end{equation}
\begin{equation}\label{e:crossing-0-2e}
\P\big (\tau_0^X<\infty, \Delta X_{\tau_0^X}=\Delta Z_{\tau_0^X}\big)=\P\big (\tau_0^Y<\infty\big)=\frac{\mu_Z}{c}\, .
\end{equation}
and consequently
\begin{equation}\label{e:crossing-0-3e}
\P\big (  X_{\tau_0^X}\in dx \, \big| \tau_0^X<\infty,\Delta X_{\tau_0^X}=\Delta Z_{\tau_0^X}\big)=\P\big (Y_{\tau_0^Y}\in dx \, \big|\, \tau_0^Y<\infty\big)=\frac{\nu_Z(x,\infty)\, dx}{\mu_Z}\, .
\end{equation}
\end{corollary}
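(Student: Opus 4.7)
The plan is to reduce Corollary \ref{c:crossing-0} to a direct substitution into \eqref{e:crossing-0-2}, \eqref{e:crossing-0-3}, \eqref{e:crossing-0-2y}, \eqref{e:crossing-0-3y}, using the crucial observation that the hypothesis $\mu_Z+\mu_C\le c$ forces $\Phi_X(0)=\Phi_Y(0)=0$.

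First I would check that the assumption $\mu_Z+\mu_C\le c$ gives $\E X_1=-c+\mu_Z+\mu_C\le 0$ and $\E Y_1=-c+\mu_Z\le -\mu_C\le 0$. As noted in the paragraph preceding \eqref{e:hitting-y}, this is exactly equivalent to $\Phi_X(0)=0$, and the same argument applied to $-Y$ (which is also spectrally negative with strictly convex Laplace exponent) yields $\Phi_Y(0)=0$.

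Next, I would substitute $\Phi_X(0)=0$ into \eqref{e:crossing-0-2}. The correction term $\Phi_X(0)e^{\Phi_X(0)x}\int_x^{\infty}e^{-\Phi_X(0)u}\nu_Z(u,\infty)\, du$ vanishes, since the integral is bounded above by $\int_0^{\infty}\nu_Z(u,\infty)\, du=\mu_Z<\infty$ and is multiplied by the factor $\Phi_X(0)=0$. Thus
\[
\P\big(\tau_0^X<\infty,X_{\tau_0^X}\in dx,\Delta X_{\tau_0^X}=\Delta Z_{\tau_0^X}\big)=\frac{1}{c}\nu_Z(x,\infty)\,dx.
\]
The same argument applied to \eqref{e:crossing-0-2y} with $\Phi_Y(0)=0$ gives $\P(\tau_0^Y<\infty,Y_{\tau_0^Y}\in dx)=\frac{1}{c}\nu_Z(x,\infty)\,dx$, and the two right-hand sides coincide, establishing \eqref{e:crossing-0-1e}.

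For \eqref{e:crossing-0-2e}, I would plug $\Phi_X(0)=0$ into \eqref{e:crossing-0-3} and $\Phi_Y(0)=0$ into \eqref{e:crossing-0-3y}, so the exponential weight in both integrals becomes $1$ and both reduce to $\frac{1}{c}\int_0^{\infty}\nu_Z(u,\infty)\,du=\mu_Z/c$. Finally, \eqref{e:crossing-0-3e} follows by taking the ratio of \eqref{e:crossing-0-1e} and \eqref{e:crossing-0-2e}, which is just the definition of conditional probability. There is no real obstacle here: the only thing to be careful about is justifying the vanishing of the $\Phi_X(0)$-term (finiteness of $\int_0^\infty\nu_Z(u,\infty)\,du$), and the implication $\mu_Z+\mu_C\le c\Rightarrow \Phi_Y(0)=0$, which uses that $\mu_C\ge 0$.
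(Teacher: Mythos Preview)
Your proposal is correct and follows essentially the same route as the paper's proof: deduce $\Phi_X(0)=\Phi_Y(0)=0$ from $\E X_1\le 0$ and $\E Y_1\le 0$, substitute into \eqref{e:crossing-0-2}, \eqref{e:crossing-0-3}, \eqref{e:crossing-0-2y}, \eqref{e:crossing-0-3y}, and take the ratio for the conditional statement. You have merely spelled out in more detail why the correction term vanishes and why $\E Y_1\le 0$ follows from the hypothesis.
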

\begin{proof}
The assumption implies that $\E X_1 \le 0$, hence  $\Phi_X(0)=\Phi_Y(0)=0$. Now the first equality follows from \eqref{e:crossing-0-2} and \eqref{e:crossing-0-2y}, the second one is the consequence of \eqref{e:crossing-0-3}, \eqref{e:crossing-0-3y} and the fact that $\mu_Z=\int_0^{\infty}\nu_Z(x,\infty)\, dx$, while the third is immediate from the first two.
\end{proof}

We are now ready to prove Theorem \ref{t:main} in the compound Poisson case. As above, $Y_t=-ct+Z_t$, $X_t=-ct+Z_t+C_t$ where $\mu_Z+\mu_C\le c$. Since $\E Y_1<0$, $\sup_{t\ge 0}Y_t <\infty$ a.s. This supremum can be written as the geometric sum of overshoots leading to a new supremum. The distribution of the overshoots and the parameter of the geometric random variable (i.e.~the probability of reaching the new supremum) are given by \eqref{e:crossing-0-2y} and \eqref{e:crossing-0-3y}. On the other hand, $\sup_{0\le t <\sigma}X_t$ is also a geometric sum of those overshoots leading to the new supremum of $X$ obtained by jumps of $Z$. The distribution of such overshoots and the parameter of the geometric random variable are by \eqref{e:crossing-0-2e} and \eqref{e:crossing-0-3e} equal as before. This shows that $\sup_{t\ge 0}Y_t$ and $\sup_{0\le t <\sigma}X_t$ have the same distribution. We now make these arguments more precise.

\medskip
\noindent
\emph{Proof of Theorem \ref{t:main} -- compound Poisson process case.} Let $\tau^{(0)}=0$, $\tau^{(1)}=\inf\{t>0:\, Y_t>\ov Y_{t-}\}$ and for $n\ge 2$ define $\tau^{(n)}=\inf\{t>\tau^{(n-1)}:\, Y_t>\ov Y_{t-}\}$ on $\{\tau^{(n-1)}<\infty\}$. Note that $\tau^{(1)}=\tau_0^Y$. On $\{\tau^{(n)}<\infty\}$ define $I_n:= Y_{\tau^{(n)}}-\ov{Y}_{\tau^{(n-1)}}$, and let $N:=\max\{n:\, \tau^{(n)}<\infty\}$. Then
\begin{equation}\label{e:distribution-1}
\ov{Y}_{\infty}
=\sup_{t\ge 0}Y_t=\sum_{n=1}^N I_n
\end{equation}
(with the convention that $\sum_{n=1}^0 = 0$). By the strong Markov property $N$ has geometric distribution with parameter $\P(\tau^{(1)}=\infty)=1-\frac{\mu_Z}{c}$,  and for all $n\ge 1$, conditionally on $\tau^{(n)}<\infty$, $I_n$ has the distribution $\frac{1}{\mu_Z}\nu_Z(x,\infty)\, dx$, see \eqref{e:crossing-0-3e}.

On the other hand, let $\varsigma^{(0)}=0$, and let
$$
\wt{\varsigma} =\wt{\varsigma}^{(1)}:=\inf\{t>0:\, X_t>\ov{X}_{t-},\,  \Delta X_t=\Delta Z_t\}
=\inf\{t>0:\, \Delta Z_t>\ov{X}_{t-}-X_{t-}\}
$$
be the first time the new supremum of $X$ is reached by the jump of $Z$. Inductively, for $n\ge 2$  we define  $\wt{\varsigma}^{(n)}:=\inf\{t> \wt{\varsigma}^{(n-1)}:\,  X_t>\ov{X}_{t-},\,  \Delta X_t=\Delta Z_t\}$ on $\{ \wt{\varsigma}^{(n-1)}<\infty\}$. We are interested in times $\wt{\varsigma}^{(n)}$ only if they occur before $\sigma$. Hence we define $\varsigma^{(n)}:=\wt{\varsigma}^{(n)}\I_{(\wt{\varsigma}^{(n)}<\sigma)}+\infty\I_{(\wt{\varsigma}^{(n)}>\sigma)}$, $n\ge 1$.
Let further $J_n:= X_{\varsigma^{(n)}}-\ov{X}_{\varsigma^{(n-1)}}$ on $\{\varsigma^{(n)}<\infty\}$, and $M:=\max\{n:\, \varsigma^{(n)}<\infty\}$. Then
\begin{equation}\label{e:distribution-2}
\sup_{0\le t<\sigma}X_t=\sum_{n=1}^{M}J_n\, .
\end{equation}
Again, by the strong Markov property at stopping times $\varsigma^{(n)}$, $M$ has geometric distribution with parameter $\P(\varsigma^{(1)}=\infty)=1-\P(\tau_0^X<\infty,  \Delta X_{\tau_0^X}=\Delta Z_{\tau_0^X})=1-\frac{\mu_Z}{c}$, see \eqref{e:crossing-0-2e}. Further, by \eqref{e:crossing-0-3e}, for all $n\ge 1$, conditionally on $\varsigma^{(n)}<\infty$, $J_n$ has the distribution $\frac{1}{\mu_Z}\nu_Z(x,\infty)\, dx$. This finishes the proof. \qed

\begin{prop}\label{p:distributions-ne}
Assume that $\mu_Z<c<\mu_Z+\mu_C$. Then $\sup_{t\ge 0} Y_t$ and $\sup_{0\le t <\sigma} X_t$ have different distributions.
\end{prop}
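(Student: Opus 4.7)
My plan is to distinguish the two distributions by exhibiting different atoms at $0$. Since $Y_0=X_0=0$ and both suprema are nonnegative, it suffices to show that $\P(\sup_{t\ge 0}Y_t=0)\ne \P(\sup_{0\le t<\sigma}X_t=0)$.

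First I will compute the left-hand probability. The hypothesis $\mu_Z<c$ gives $\E Y_1<0$, hence $\Phi_Y(0)=0$. Since $\sup_{t\ge 0}Y_t=0$ precisely when $\tau_0^Y=\infty$, identity \eqref{e:crossing-0-3y} immediately yields $\P(\sup_{t\ge 0}Y_t=0)=1-\mu_Z/c$, a quantity that notably does not involve $C$.

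Next I will compute the right-hand probability. Because $X_t=-ct+Z_t+C_t$ is strictly decreasing between its jumps, any new supremum of $X$ can only be reached at a jump time; hence $\sup_{0\le t<\sigma}X_t=0$ iff no jump of $Z$ crosses the running supremum before $\sigma$, which in the notation of the previous proof is exactly the event $\{\varsigma^{(1)}=\infty\}$. Equivalently, at the (almost surely finite) time $\tau_0^X$ the overshoot is produced by a jump of $C$ rather than of $Z$. Thus \eqref{e:crossing-0-3} gives
$$\P\big(\sup_{0\le t<\sigma}X_t=0\big)=1-\frac{1}{c}\int_0^{\infty}e^{-\Phi_X(0)u}\nu_Z(u,\infty)\,du.$$

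Finally, the hypothesis $c<\mu_Z+\mu_C$ says $\E X_1>0$, hence $\Phi_X(0)>0$. Provided $\mu_Z>0$ (the degenerate case $\mu_Z=0$ means $Z\equiv 0$, in which both suprema vanish identically), $\nu_Z(u,\infty)$ is strictly positive on a set of positive Lebesgue measure, so
$$\int_0^{\infty}e^{-\Phi_X(0)u}\nu_Z(u,\infty)\,du<\int_0^{\infty}\nu_Z(u,\infty)\,du=\mu_Z,$$
and therefore $\P(\sup_{0\le t<\sigma}X_t=0)>1-\mu_Z/c=\P(\sup_{t\ge 0}Y_t=0)$. This strict inequality forces the two distributions to differ. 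I do not expect any real obstacle: once one thinks to compare the atoms at zero, the conclusion follows directly from Proposition \ref{p:crossing-0} together with the fact that the boundary condition $\E X_1=0$ cleanly separates the regime $\Phi_X(0)=0$ from the regime $\Phi_X(0)>0$.
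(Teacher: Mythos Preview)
Your argument is correct and is essentially the same as the paper's: both compute $\P(\varsigma^{(1)}=\infty)$ and $\P(\tau^{(1)}=\infty)$ via \eqref{e:crossing-0-3} and \eqref{e:crossing-0-3y} and observe that $\Phi_X(0)>0$ forces a strict inequality. The only difference is packaging: the paper invokes a separate lemma (Lemma~\ref{l:lt-ne}) stating that two geometric compounds with the same law must have the same geometric parameter, whereas you read off that parameter directly as the atom at $0$, which is the more economical route here.
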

For the proof we need the following simple result.
\begin{lemma}\label{l:lt-ne}
Let $(\xi_n)_{n\ge 1}$ be an i.i.d.~sequence of strictly positive random variables, $S_n=\xi_1+\cdots +\xi_n$, and let $N$ be an independent geometric random variable with parameter $1-\rho\in (0,1)$. Similarly, let $(\eta_n)_{n\ge 1}$ be an i.i.d.~sequence of strictly positive random variables, $T_n=\eta_1+\cdots +\eta_n$, and let $M$ be an independent geometric random variable with parameter $1-\varrho\in (0,1)$. If $S_N\stackrel{d}{=}T_M$ , then $\rho=\varrho$ and $\xi_1\stackrel{d}{=}\eta_1$.
\end{lemma}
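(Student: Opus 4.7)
The plan is to compute the Laplace transforms of $S_N$ and $T_M$ in closed form, equate them, and then separate the contribution of the geometric parameter from the contribution of the summand distribution by letting $\lambda\to\infty$.

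Write $\hat\xi(\lambda):=\E[e^{-\lambda\xi_1}]$ and $\hat\eta(\lambda):=\E[e^{-\lambda\eta_1}]$ for $\lambda\ge 0$. Since $N$ is independent of $(\xi_n)$ with $\P(N=n)=(1-\rho)\rho^n$ for $n\ge 0$, conditioning on $N$ and using $S_0=0$ gives
$$
\E[e^{-\lambda S_N}]=\sum_{n=0}^{\infty}(1-\rho)\rho^n\hat\xi(\lambda)^n=\frac{1-\rho}{1-\rho\,\hat\xi(\lambda)}\, ,
$$
and analogously $\E[e^{-\lambda T_M}]=\frac{1-\varrho}{1-\varrho\,\hat\eta(\lambda)}$. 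The hypothesis $S_N\stackrel{d}{=}T_M$ then yields
$$
\frac{1-\rho}{1-\rho\,\hat\xi(\lambda)}=\frac{1-\varrho}{1-\varrho\,\hat\eta(\lambda)}\qquad\text{for all }\lambda\ge 0\, .
$$

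Next I would extract $\rho=\varrho$ by letting $\lambda\to\infty$. Because $\xi_1>0$ a.s., we have $e^{-\lambda\xi_1}\to 0$ a.s.~as $\lambda\to\infty$, so by dominated convergence $\hat\xi(\lambda)\to 0$, and similarly $\hat\eta(\lambda)\to 0$. The two sides of the above identity therefore tend to $1-\rho$ and $1-\varrho$ respectively, giving $\rho=\varrho$. Since $\rho\in(0,1)$, we may now cancel $1-\rho=1-\varrho$ and invert to obtain $\rho\,\hat\xi(\lambda)=\rho\,\hat\eta(\lambda)$, hence $\hat\xi(\lambda)=\hat\eta(\lambda)$ for every $\lambda\ge 0$. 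Uniqueness of the Laplace transform on $[0,\infty)$ then yields $\xi_1\stackrel{d}{=}\eta_1$, completing the proof.

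There is no real obstacle here: once the compound-geometric Laplace transform is written down, the only subtle point is justifying $\hat\xi(\lambda)\to 0$ as $\lambda\to\infty$, which is exactly where the strict positivity hypothesis on $\xi_1$ (and $\eta_1$) is used. Without it one would only get $\hat\xi(\lambda)\to\P(\xi_1=0)$, and the separation of the geometric parameter from the step distribution would fail.
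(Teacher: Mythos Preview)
Your proof is correct and follows exactly the same route as the paper's: compute the compound-geometric Laplace transforms, equate them, send $\lambda\to\infty$ using strict positivity to recover $\rho=\varrho$, and then cancel to get $\hat\xi=\hat\eta$. The only addition is your explicit remark on why strict positivity is needed, which is a helpful clarification but not a departure from the paper's argument.
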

\begin{proof}
Let $f(\lambda)=\E \left[e^{-\lambda \xi_1}\right]$ and $g(\lambda)=\E \left[e^{-\lambda \eta_1}\right]$. Then the Laplace transforms of $S_N$ and $T_M$ are given by
$$
\E\left[e^{-\lambda S_N}\right]=\frac{1-\rho}{1-\rho f(\lambda)}\, ,\qquad \E\left[e^{-\lambda T_M}\right]=\frac{1-\varrho}{1-\varrho g(\lambda)}\, .
$$
By the assumption, these two Laplace transforms are equal. By letting $\lambda \to \infty$ and using that $\lim_{\lambda\to \infty}f(\lambda)=\lim_{\lambda\to \infty}g(\lambda)=0$, we first get that $\varrho=\rho$, and then $g=f$.
\end{proof}

\noindent
\emph{Proof of Proposition \ref{p:distributions-ne}.} We use the notation from the proof of Theorem \ref{t:main} given above. The representations \eqref{e:distribution-1} and \eqref{e:distribution-2} are still valid. On the other hand, by the assumption we have that $\Phi_Y(0)=0$ while $\Phi_X(0)>0$. Therefore,
\begin{eqnarray*}
\P(\varsigma^{(1)}=\infty)&=&1-\P(\tau_0^X<\infty,  \Delta X_{\tau_0^X}=\Delta Z_{\tau_0^X})\\
&=&1-\frac{1}{c} \int_0^{\infty}e^{-\Phi_X(0)y} \nu_Z(y,\infty)\, dy \\
&> & 1-\frac{1}{c} \int_0^{\infty} \nu_Z(y,\infty)\, dy\\
&=&1-\frac{\mu_Z}{c}=\P(\tau^{(1)}=\infty)\, .
\end{eqnarray*}
The claim now follows from Lemma \ref{l:lt-ne}. \qed

\begin{remark}
(a) Assume that $\mu_C=\E C_1=\infty$ so that $\mu_Z<c<\mu_Z+\mu_C$. It is easy to see that Proposition \ref{p:crossing-0}  is still valid, hence  we conclude that Proposition \ref{p:distributions-ne} also holds.

\noindent
(b) Assume that $\mu_Z>c$. Then $Y$ drifts to $+\infty$, hence $\P(\tau_0^Y<\infty)=1$. We check that \eqref{e:crossing-0-3y} also gives this result. Indeed, since $\Phi_Y(0)>0$ solves the equation $\psi_Y(\lambda)=c\lambda -\phi_Z(\lambda)=0$, we have that $c\Phi_Y(0)= \phi_Z(\Phi_Y(0))$. By use of
$$
\phi_Z(\lambda)=\int_{(0,\infty)}(1-e^{-\lambda x})\, \nu_Z(dx)=\lambda \int_0^{\infty}e^{-\lambda x}\nu_Z(x,\infty)\, dx\, ,
$$
we see that
$$
c=\frac{1}{\Phi_Y(0)}\,  \phi_Z(\Phi_Y(0))=\int_0^{\infty}e^{-\Phi_Y(0)x}\nu_Z(x,\infty)\, dx\, .
$$
\end{remark}

\section{Two approximation results}\label{sec-3}
Let $\D=\D([0,\infty),\R)$ denote the space of all functions $x:[0,\infty)\to \R$ that are right-continuous and have left limits. Endowed with the Skorohod $J_1$-topology $\D$ becomes a Polish space (cf.~\cite[Chapter VI]{JS}). Let $\ov{x}(t):=\sup_{0\le s \le t}x(s)$.

\begin{lemma}\label{l:approx-CPP}
Assume that $(y_n)_{n\ge 1}\subset \D$, $y\in \D$ and $y_n\to y$ in $\D$. Let $c\in \D$ be of the form
\begin{equation}\label{e:form-of-c}
c(t)=\sum_{k\ge 1}\Delta c(s_k)\I_{(s_k\le t)}
\end{equation}
where $0<s_1<s_2<\dots $, $\lim_k{s_k}=\infty$, and $\Delta c(s_k)>0$ for all $k\ge 1$. Let $x:=y+c$, $x_n:=y_n+c$, $n\ge 1$, and assume that
\begin{equation}\label{e:no-sim-jumps}
\Delta y(s_k)=\Delta y_n(s_k)=0 \quad \text{for all }n\ge 1 \text{ and }k\ge 1\, ,
\end{equation}
and
\begin{equation}\label{e:strict-overshoot}
\Delta c(s_k)\neq \ov{x}(s_k-)-x(s_k-)\quad \text{for all }k\ge 1\, .
\end{equation}
Define
\begin{eqnarray}
\sigma&:=&\inf\{t>0:\, \Delta c(t)> \ov{x}(t-)-x(t-)\}\, ,\label{e:def-sigma-D}\\
\sigma_n &:=&\inf\{t>0:\, \Delta c(t)> \ov{x}_n(t-)-x_n(t-)\}\, \label{e:def-sigma-n-D}.
\end{eqnarray}
Then $\sigma=\lim_{n\to \infty}\sigma_n$ and $\ov{x}(\sigma-)=\lim_{n\to \infty}\ov{x}_n(\sigma_n-)$.
\end{lemma}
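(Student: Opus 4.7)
The plan is to reduce the continuous-time convergence to a discrete induction over the jump indices of $c$: since $\sigma$ and each $\sigma_n$ are necessarily of the form $s_k$, it suffices to establish convergence of the relevant quantities at every $s_k$ and then compare using the strict inequality (\ref{e:strict-overshoot}).

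First I would show the pointwise convergence $x_n(s_k-)\to x(s_k-)$ for each $k$. Since $y$ has no jump at $s_k$ by (\ref{e:no-sim-jumps}), $s_k$ is a continuity point of the $J_1$-limit $y$, so the Skorohod time-change representation gives $y_n(t_n)\to y(s_k)$ for every $t_n\to s_k$; combined with $\Delta y_n(s_k)=0$ this yields $y_n(s_k-)\to y(s_k-)$ and hence $x_n(s_k-)\to x(s_k-)$. Next I would prove by induction on $k$ that $\overline{x}_n(s_k-)\to\overline{x}(s_k-)$ and $\overline{x}_n(s_k)\to\overline{x}(s_k)$. Setting $a_{k-1}:=c(s_{k-1})$ (with $a_0:=0$), the constancy of $c$ on $[s_{k-1},s_k)$ gives
\[
\overline{x}_n(s_k-) \;=\; \overline{x}_n(s_{k-1}) \,\vee\, \Bigl(a_{k-1}+\sup_{s_{k-1}\le t<s_k} y_n(t)\Bigr),
\]
with the analogous identity for $\overline{x}$. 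Continuity of $y$ at both endpoints $s_{k-1}$ and $s_k$ makes the sup functional on the closed interval $[s_{k-1},s_k]$ $J_1$-continuous, and since $y$ is continuous at $s_k$ the half-open sup coincides with the closed one; hence $\sup_{[s_{k-1},s_k)}y_n\to\sup_{[s_{k-1},s_k)}y$, and the induction closes against the trivial base $\overline{x}_n(0)=x_n(0)\to x(0)$.

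With these convergences in hand, set $d(k):=\Delta c(s_k)-(\overline{x}(s_k-)-x(s_k-))$ and define $d_n(k)$ analogously, so that $\sigma=s_K$ with $K:=\inf\{k\ge 1:d(k)>0\}$ (using the convention $s_\infty=\infty$) and $\sigma_n=s_{K_n}$. The previous step gives $d_n(k)\to d(k)$ for each fixed $k$, while (\ref{e:strict-overshoot}) guarantees $d(k)\neq 0$, so $\operatorname{sgn}d_n(k)=\operatorname{sgn}d(k)$ for $n$ large. If $K<\infty$, applying this to $k=1,\dots,K$ forces $K_n=K$ eventually, so $\sigma_n=\sigma$ and $\overline{x}_n(\sigma_n-)\to\overline{x}(\sigma-)$ by the induction above. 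If $K=\infty$, the same sign-stability forces $K_n>k$ eventually for every fixed $k$, so $\sigma_n\to\infty=\sigma$, and the convergence of the suprema follows by monotonicity and a limit in $k$.

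The main obstacle is the second paragraph: transferring $J_1$-convergence of $y_n$ to convergence of its sup on the half-open interval $[s_{k-1},s_k)$. The hypothesis (\ref{e:no-sim-jumps}) is exactly what is needed, since it makes the endpoints continuity points of the limit $y$ and thereby restores $J_1$-continuity of the sup functional; everything else is discrete bookkeeping driven by the strict-overshoot condition (\ref{e:strict-overshoot}).
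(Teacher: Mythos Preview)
Your proposal is correct and follows essentially the same induction over the jump times $s_k$ as the paper, using continuity of $y$ at each $s_k$ to transfer the sup functional and then the strict-overshoot condition \eqref{e:strict-overshoot} to force $\sigma_n=\sigma$ eventually. The only cosmetic difference is that the paper packages the induction step by ``moving'' the jump $\Delta c(s_k)$ from $c$ into $y$ (so as to reduce each step to the first-jump case), whereas you write out the explicit decomposition $\ov{x}_n(s_k-)=\ov{x}_n(s_{k-1})\vee\bigl(a_{k-1}+\sup_{[s_{k-1},s_k)}y_n\bigr)$; both rely on the same $J_1$-continuity of the running supremum at continuity points of the limit (the paper cites \cite[Proposition VI 2.4]{JS}).
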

\begin{remark}\label{r:approx-CPP}
Note that the function $c$ is a typical realization of the subordinator $C$ with finite L\'evy measure. Assumption \eqref{e:no-sim-jumps} says that $y$ and $y_n$ do not jump at times when $c$ has a jump. Finally, assumption  \eqref{e:strict-overshoot} says that no jump of $c$ will hit the exact level of the current maximum of the function $x$.
\end{remark}

\noindent
\emph{Proof of Lemma \ref{l:approx-CPP}.}
We first note that $y_n(t)\to y(t)$ at every continuity point of $y$. Further, by \cite[Proposition VI 2.2, Proposition VI 2.1(a)]{JS} and the assumption \eqref{e:no-sim-jumps} we have $x_n\to x$ in $\D$.

Since $x_n=y_n$ and $x=y$ on $[0,s_1)$, and since $y$ and $y_n$ are continuous at $s_1$, it follows that
\begin{eqnarray*}
\ov{x}(s_1-)=\ov{y}(s_1-)\, &\qquad & x(s_1-)=y(s_1-)=y(s_1)\, ,\\
\ov{x}_n(s_1-)=\ov{y}_n(s_1-)\, &\qquad & x_n(s_1-)=y_n(s_1-)=y_n(s_1)\, .
\end{eqnarray*}
Further, $y(s_1)=\lim_{n\to \infty}y_n(s_1)$ implies that
\begin{equation}\label{e:cont-sigma-1}
x(s_1-)=\lim_{n\to \infty}x_n(s_1-)\, .
\end{equation}
By continuity of $y$ at $s_1$  it follows from \cite[Proposition VI 2.4]{JS} that $\ov{y}(s_1)=\lim_{n\to \infty}\ov{y}_n(s_1)$. Again by continuity of $y$ at $s_1$ we see that $\ov{y}(s_1)=\ov{y}(s_1-)$ and similarly for $\ov{y}_n$. We conclude that
\begin{equation}\label{e:cont-sigma-2}
\ov{x}(s_1-)=\lim_{n\to \infty}\ov{x}_n(s_1-)\, .
\end{equation}
Now \eqref{e:cont-sigma-1} and \eqref{e:cont-sigma-2} give together that
\begin{equation}\label{e:cont-sigma-3}
\ov{x}(s_1-)-x(s_1-)=\lim_{n\to \infty}\big(\ov{x}_n(s_1-)-x_n(s_1-)\big)\, .
\end{equation}

Suppose that $\sigma=s_1$, i.e.~$\Delta c(s_1)>\ov{x}(s_1-)-x(s_1-)$. It follows from \eqref{e:cont-sigma-3} that there exists $n'_1\in \N$ such that for all $n\ge n'_1$ it holds that $\Delta c(s_1)>\ov{x}_n(s_1-)-x_n(s_1-)$. Therefore, $\sigma_n=s_1$ for all $n\ge n'_1$, and $\ov{x}_n(\sigma_n-)=\ov{x}_n(s_1-)$. In particular, it holds that $\lim_{n\to \infty}\sigma_n=s_1=\sigma$, and $\lim_{n\to \infty}\ov{x}_n(\sigma_n-)=\lim_{n\to \infty}\ov{x}_n(s_1-)=\ov{x}(s_1-)=\ov{x}(\sigma-)$.

Suppose now that $\Delta c(s_1)<\ov{x}(s_1-)-x(s_1-)$ which is by \eqref{e:strict-overshoot} same as $\sigma\neq s_1$. Then there exists $n_1\in \N$ such that for all $n\ge n_1$ it holds that $\Delta c(s_1)<\ov{x}_n(s_1-)-x_n(s_1-)$, i.e.~$\sigma_n\neq s_1$.

So far we have shown that if  $\sigma=s_1$, the claim of the lemma is true. If $\sigma\neq s_1$, we consider the interval $[0,s_2]$. Set $y^{(1)}=y$, $y^{(1)}_n=y_n$ and $c^{(1)}=c$. Define $y^{(2)}$, $y^{(2)}_n$ and $c^{(2)}$ in the following way:
\begin{eqnarray*}
y^{(2)}(t)&=&y^{(1)}(t)+\Delta c(s_1)\, ,\\
y^{(2)}_n(t)&=&y^{(1)}_n(t)+\Delta c(s_1)\, ,\\
c^{(2)}(t)&=&c^{(1)}(t)-\Delta c(s_1)=\sum_{j\ge 2}\Delta c(s_j)\, ,
\end{eqnarray*}
(the jump $\Delta c(s_1)$ is moved from $c$ to $y$ and $y_n$).
Then $y^{(2)}, y^{(2)}_n\in \D$, $\lim_{n\to \infty}y^{(2)}_n = y^{(2)}$ in $\D$, $c^{(2)}$ has the same form as $c^{(1)}$ (piecewise constant with positive jumps in $s_2<s_3< \dots $), and it holds that
$$
x=y^{(2)}+c^{(2)} \quad \textrm{and }\quad x_n=y^{(2)}_n+c^{(2)}\, .
$$
The functions $y^{(2)}$ and $y^{(2)}_n$ satisfy the assumption \eqref{e:no-sim-jumps} for all $k\ge 2$: $\Delta y^{(2)}(s_k)=\Delta y^{(2)}_n(s_k)=0$, $n\ge 1$, $k\ge 2$. The assumption \eqref{e:strict-overshoot} is also valid if $c=c^{(1)}$ is replaced by $c^{(2)}$. In the same way as in the first part of the proof we conclude that
\begin{eqnarray}
& &  \ov{x}(s_2-)=\lim_{n\to \infty}\ov{x}_n(s_2-)\, ,\label{e:cont-sigma-4}\\
& & \ov{x}(s_2-)-x(s_2-)=\lim_{n\to \infty}\big(\ov{x}_n(s_2-)-x_n(s_2-)\big)\, .\label{e:cont-sigma-5}
\end{eqnarray}
Suppose that $\sigma=s_2$, i.e.~$\Delta c(s_2)=\Delta c^{(2)}(s_2)>\ov{x}(s_2-)-x(s_2-)$. It follows from \eqref{e:cont-sigma-5} that there exists $n'_2\in \N$ such that for all $n\ge n'_2$ it holds $\Delta c(s_2)>\ov{x}_n(s_2-)-x_n(s_2-)$. Since $\sigma \neq s_1$, we have that $\sigma_n\neq s_1$ for $n\ge n_1$, hence for $n\ge n_1\vee n'_2$ it holds that $\sigma_n=s_2$. This immediately implies  $\lim_{n\to \infty}\sigma_n=s_2=\sigma$. From \eqref{e:cont-sigma-4} we conclude that $\lim_{n\to \infty}\ov{x}_n(\sigma_n-)=\lim_{n\to \infty}\ov{x}_n(s_2-)=\ov{x}(s_2-)=\ov{x}(\sigma-)$.

If $\Delta c(s_2)=\Delta c^{(2)}(s_2)<\ov{x}(s_2-)-f(s_2-)$, then there exists $n_2\in \N$ such that for all $n\ge n_2$ it holds that $\Delta c(s_2)<\ov{x}_n(s_2-)-x_n(s_2-)$, i.e.~$\sigma_n\neq s_2$.

The proof continuous by induction. \qed

\medskip
If $Y$, $Y^{(n)}$, $n\ge 1$, are L\'evy processes, we will write $Y^{(n)}\Rightarrow Y$ for the weak convergence of induced probability measures on $\D$. We use the analogous notation for the weak convergence of random variables.
\begin{prop}\label{p:approx-1}
Assume that $Y$ is a L\'evy process with infinite L\'evy measure and $(Y^{(n)})_{n\ge 1}$ a sequence of L\'evy processes such that $Y^{(n)}\Rightarrow Y$. Let $C$ be an independent subordinator with finite L\'evy measure. Define $X:=Y+C$, $X^{(n)}:=Y^{(n)}+C$, $n\ge 1$, and let
$$
\sigma^{(n)}:=\inf\{t>0:\, \Delta C_t> \ov{X}^{(n)}_{t-}-X^{(n)}_{t-}\}\, .
$$
If
\begin{equation}\label{e:approx-1-a}
\sup_{0\le t<\infty}Y^{(n)}_t\Rightarrow \sup_{0\le t <\infty}Y_t
\end{equation}
and
\begin{equation}\label{e:approx-1-b}
\sup_{0\le t<\infty}Y^{(n)}_t \stackrel{d}{=}\sup_{0\le t <\sigma^{(n)}}X^{(n)}_t \qquad \text{for all }n\ge1 \, ,
\end{equation}
then also
$$
\sup_{0\le t<\infty}Y_t \stackrel{d}{=}\sup_{0\le t <\sigma}X_t\, .
$$
\end{prop}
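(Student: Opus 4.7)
The plan is to apply Lemma \ref{l:approx-CPP} pathwise after a Skorohod coupling, and then to chain the resulting weak convergence with the hypotheses \eqref{e:approx-1-a} and \eqref{e:approx-1-b}. First I would invoke the Skorohod representation theorem (valid since $\D$ with the $J_1$ topology is Polish) to realize, on a common probability space $(\wt\Omega,\wt\FF,\wt\P)$, random elements $\wt Y^{(n)}\stackrel{d}{=}Y^{(n)}$ and $\wt Y\stackrel{d}{=}Y$ with $\wt Y^{(n)}\to \wt Y$ in $\D$ almost surely, and then enlarge this space to carry an independent subordinator $\wt C\stackrel{d}{=}C$. Setting $\wt X^{(n)}:=\wt Y^{(n)}+\wt C$ and $\wt X:=\wt Y+\wt C$, the joint laws of $(\wt Y^{(n)},\wt C)$ and $(\wt Y,\wt C)$ agree with those of $(Y^{(n)},C)$ and $(Y,C)$, so the corresponding first-passage times $\wt\sigma^{(n)},\wt\sigma$ and suprema $\ov{\wt X}^{(n)}_{\wt\sigma^{(n)}-}$, $\ov{\wt X}_{\wt\sigma-}$ have the originals' laws.

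Next I would verify, for $\wt\P$-a.e.~$\omega$, the hypotheses of Lemma \ref{l:approx-CPP} with $y_n=\wt Y^{(n)}(\omega)$, $y=\wt Y(\omega)$, $c=\wt C(\omega)$. The representation \eqref{e:form-of-c} holds a.s.~because $\wt C$ is a driftless subordinator with finite L\'evy measure, i.e.~a compound Poisson process. Condition \eqref{e:no-sim-jumps} is a standard consequence of independence of L\'evy processes: two independent L\'evy processes almost surely have no common jump time. The delicate assumption \eqref{e:strict-overshoot} requires $\Delta \wt C_{s_k}\neq \ov{\wt X}_{s_k-}-\wt X_{s_k-}$ at every jump time $s_k$ of $\wt C$. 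Here the hypothesis that $Y$ has infinite L\'evy measure enters essentially: it guarantees that $\wt X_t$ has a continuous (atomless) distribution for every $t>0$, and an argument based on the independence of the jump size $\Delta \wt C_{s_k}$ from $\wt\FF_{s_k-}$ (conditioning on the pre-jump history of $\wt X$ and integrating against the law of $\Delta \wt C_{s_k}$) shows $\wt\P(\Delta \wt C_{s_k}=\ov{\wt X}_{s_k-}-\wt X_{s_k-})=0$ for each $k$, whence \eqref{e:strict-overshoot} holds a.s.~by countable additivity.

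Lemma \ref{l:approx-CPP} then yields, pathwise and hence a.s., $\wt\sigma^{(n)}\to \wt\sigma$ and $\ov{\wt X}^{(n)}_{\wt\sigma^{(n)}-}\to \ov{\wt X}_{\wt\sigma-}$, so in particular convergence in distribution. Translating back to the original laws gives
$$
\sup_{0\le t<\sigma^{(n)}}X^{(n)}_t \Rightarrow \sup_{0\le t<\sigma}X_t.
$$
Combining this with \eqref{e:approx-1-b} (which identifies the left-hand side in law with $\sup_t Y^{(n)}_t$) and \eqref{e:approx-1-a} (which gives $\sup_t Y^{(n)}_t\Rightarrow \sup_t Y_t$), the uniqueness of weak limits yields $\sup_t Y_t\stackrel{d}{=}\sup_{0\le t<\sigma}X_t$, as required.

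The main obstacle I expect is the verification of \eqref{e:strict-overshoot}: one must rule out that any of the countably many jumps of $\wt C$ exactly equals the current drawdown of $\wt X$. The infinite L\'evy measure assumption on $Y$ is precisely what rules this out; without it, the drawdown distribution could carry an atom overlapping with an atom of the jump-size distribution of $C$, and the pathwise conclusion of Lemma \ref{l:approx-CPP} would fail on a set of positive probability.
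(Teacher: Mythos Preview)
Your proposal is correct and follows essentially the same route as the paper: Skorohod representation, pathwise application of Lemma~\ref{l:approx-CPP} after checking \eqref{e:form-of-c}, \eqref{e:no-sim-jumps}, \eqref{e:strict-overshoot} almost surely, and then combining with \eqref{e:approx-1-a}--\eqref{e:approx-1-b}. The only difference is in how \eqref{e:strict-overshoot} is justified: the paper dispatches it by citing \cite[Proposition~VI~4]{Ber} (which, for a L\'evy process that is not compound Poisson, gives atomlessness of the relevant fluctuation quantity), whereas you sketch a conditioning argument. Your sketch is morally right, but note that continuity of the one-dimensional marginals $X_t$ is not quite the statement needed---what must be atomless on $(0,\infty)$ is the drawdown $\ov X_{s_k-}-X_{s_k-}$, and this is exactly what the cited result delivers.
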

\begin{proof}
Since $\D$ is separable, it follows from Skorohod's representation theorem, see \cite[Theorem 6.7]{Bil}), that we can assume that processes $Y$ and $Y^{(n)}$, $n\ge 1$, are all defined on the same probability space $(\Omega, \FF, \P)$ and that $Y^{(n)}(\omega)\to Y(\omega)$ in $\D$ for every $\omega \in \Omega$. We note that here $Y^{(n)}(\omega)$ and $Y(\omega)$ are regarded as functions in $\D$. Without loss of generality we may assume that $C$ is defined on the same probability space $(\Omega, \FF, \P)$ and is independent of $Y$ and $(Y^{(n)})_{n\ge 1}$. Clearly, for a.e.~$\omega \in \Omega$, the function $C(\omega)$ is of the form \eqref{e:form-of-c}.
Since $C$ is independent of $Y$ and $Y^{(n)}$, $n\ge 1$,
the assumption \eqref{e:no-sim-jumps} holds $\P$-almost surely.
Further, since $X$ has infinite L\'evy measure, the assumption \eqref{e:strict-overshoot} holds $\P$-a.s.~by \cite[Proposition VI 4]{Ber}. We deduce from Lemma \ref{l:approx-CPP} that
$$
\sup_{0\le t <\sigma^{(n)}}X^{(n)}_t \to \sup_{0\le t <\sigma}X_t\, \quad \text{a.s.}
$$
The claim now follows from assumptions \eqref{e:approx-1-a} and \eqref{e:approx-1-b}.
\end{proof}

\smallskip
\begin{lemma}\label{l:approx-general}
Let $y\in \D$ and let $c\in \D$ be a non-decreasing function without continuous part such that
\begin{equation}\label{e:no-sim-jumps-2}
\Delta y(s)\Delta c(s)=0\, \quad \text{for all }s>0\, .
\end{equation}
For $n\ge 1$ define
$$
c_n(t):=\sum_{0<s\le t} \Delta c(s) \I_{\left(\Delta c(s)>\frac1n\right)}\, .
$$
Let $x:=y+c$, $x_n:=y+c_n$, $n\ge 1$, and assume that
\begin{equation}\label{e:strict-overshoot-2}
\Delta c(t)\neq \ov{x}(t-)-x(t-)\quad \text{for all }t>0\, .
\end{equation}
Define
\begin{eqnarray}
\sigma&:=&\inf\{t>0:\,  \Delta c(t)> \ov{x}(t-)-x(t-)\}\, ,\label{e:def-sigma-D2}\\
\sigma_n &:=&\inf\{t>0:\, \Delta c_n(t)> \ov{x}_n(t-)-x_n(t-)\}\, \label{e:def-sigma-n-D2}.
\end{eqnarray}
Then $\sigma=\lim_{n\to \infty}\sigma_n$ and $\ov{x}(\sigma-)=\lim_{n\to \infty}\ov{x}_n(\sigma_n-)$ where $\ov{x}(0-):=0$.
\end{lemma}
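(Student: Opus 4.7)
The plan is to leverage the monotone structure of the approximation $c_n\nearrow c$. Since $c-c_n=\sum_{s}\Delta c(s)\I_{(\Delta c(s)\le 1/n)}$ is non-decreasing in $t$ and converges to $0$ pointwise (by dominated convergence applied to the summable $c(t)=\sum_{s\le t}\Delta c(s)<\infty$), one has $\|c_n-c\|_{\infty,[0,T]}=(c-c_n)(T)\to 0$ for every $T<\infty$. Consequently, $x_n\to x$ and $\ov{x}_n\to \ov{x}$ uniformly on compacts, and so do the left-limit drawdowns $\ov{x}_n(\cdot-)-x_n(\cdot-)\to \ov{x}(\cdot-)-x(\cdot-)$.

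The key comparison is the pointwise drawdown inequality
$$
\ov{x}(t-)-x(t-)\le \ov{x}_n(t-)-x_n(t-)\, ,\qquad t>0\, ,
$$
which follows from the monotonicity of $c-c_n$: for every $s<t$, $x(s)=x_n(s)+(c-c_n)(s)\le x_n(s)+(c-c_n)(t-)$, giving $\ov{x}(t-)\le \ov{x}_n(t-)+(c-c_n)(t-)$, while $x(t-)=x_n(t-)+(c-c_n)(t-)$. Combined with $\Delta c_n(t)\le \Delta c(t)$ and the fact that for every $t\in(0,\sigma)$ with $\Delta c(t)>0$ one has $\Delta c(t)<\ov{x}(t-)-x(t-)$ (strict, by \eqref{e:strict-overshoot-2} and the definition of $\sigma$), this yields $\Delta c_n(t)\le \ov{x}_n(t-)-x_n(t-)$ for every $t\in (0,\sigma)$. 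Hence no $t\in(0,\sigma)$ belongs to the defining set for $\sigma_n$, so $\sigma_n\ge \sigma$ for every $n$.

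For the reverse bound, consider first the case $\sigma<\infty$ with the defining infimum attained, and set $\beta:=\Delta c(\sigma)-(\ov{x}(\sigma-)-x(\sigma-))>0$. For $n$ so large that $1/n<\Delta c(\sigma)$ we have $\Delta c_n(\sigma)=\Delta c(\sigma)$; by the uniform convergence above, $|(\ov{x}_n(\sigma-)-x_n(\sigma-))-(\ov{x}(\sigma-)-x(\sigma-))|<\beta/2$ for large $n$, so $\Delta c_n(\sigma)>\ov{x}_n(\sigma-)-x_n(\sigma-)$, whence $\sigma_n\le \sigma$ and thus $\sigma_n=\sigma$ eventually. If the defining infimum is not attained, we choose $t_k\downarrow \sigma$ in the defining set of $\sigma$ and run the same argument at each $t_k$, obtaining $\limsup_n\sigma_n\le t_k\to \sigma$. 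When $\sigma=\infty$, the bound $\sigma_n\ge \sigma$ already gives $\sigma_n=\sigma$.

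Finally, the convergence $\ov{x}_n(\sigma_n-)\to \ov{x}(\sigma-)$ follows from uniform convergence of $x_n$ on compacts together with right-continuity of $\ov{x}$ at $\sigma$: whenever the defining infimum is not attained, \eqref{e:strict-overshoot-2} at $t=\sigma$ forces $\Delta c(\sigma)<\ov{x}(\sigma-)-x(\sigma-)$ and hence $\ov{x}(\sigma)=\ov{x}(\sigma-)$, so $\lim_{u\downarrow \sigma}\ov{x}(u)=\ov{x}(\sigma-)$; for $\sigma=\infty$, monotone convergence yields $\sup_t x_n(t)\nearrow\sup_t x(t)$. The main obstacle is the potentially delicate comparison of drawdowns of two functions differing by a non-trivial monotone perturbation, and this is cleanly absorbed into the single drawdown inequality above; the rest is bookkeeping of the edge cases for $\sigma$ (the convention $\ov{x}(0-):=0$ takes care of the degenerate case $\sigma=0$).
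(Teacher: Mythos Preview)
Your core argument is correct and parallels the paper's: both rely on the fact that $c_n\to c$ uniformly on compacts, so that $x_n\to x$, $\ov{x}_n\to\ov{x}$, and the corresponding drawdowns all converge uniformly on $[0,T]$. Your drawdown inequality $\ov{x}(t-)-x(t-)\le\ov{x}_n(t-)-x_n(t-)$ is a genuine simplification over the paper: it yields $\sigma_n\ge\sigma$ for \emph{every} $n$ in one line, whereas the paper fixes $t<\sigma$, runs an $\epsilon$-argument to get $\sigma_n>t$ for $n$ large, and then passes to the limit. For the reverse direction (and hence $\sigma_n=\sigma$ eventually in the attained case) and for the convergence of $\ov{x}_n(\sigma_n-)$, your argument and the paper's are essentially identical.

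There is, however, a flaw in your treatment of the case ``infimum not attained''. You write that \eqref{e:strict-overshoot-2} at $t=\sigma$ forces $\Delta c(\sigma)<\ov{x}(\sigma-)-x(\sigma-)$ and \emph{hence} $\ov{x}(\sigma)=\ov{x}(\sigma-)$. The ``hence'' fails: from $\Delta c(\sigma)<\ov{x}(\sigma-)-x(\sigma-)$ you only get $x(\sigma-)+\Delta c(\sigma)<\ov{x}(\sigma-)$, but $x(\sigma)=x(\sigma-)+\Delta y(\sigma)+\Delta c(\sigma)$ may still exceed $\ov{x}(\sigma-)$ if $y$ jumps at $\sigma$ (this is compatible with \eqref{e:no-sim-jumps-2} precisely when $\Delta c(\sigma)=0$). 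In that scenario $\sigma_n>\sigma$ for all $n$ and $\ov{x}_n(\sigma_n-)\ge x_n(\sigma)=x(\sigma)>\ov{x}(\sigma-)$, so the claimed convergence of suprema breaks down. To be fair, the paper does not address this case either: it simply writes $\eta:=\Delta c(\sigma)-(\ov{x}(\sigma-)-x(\sigma-))>0$ when $0<\sigma<\infty$, i.e.\ it tacitly assumes the infimum is attained. In the intended L\'evy application this is harmless, but as a deterministic statement neither proof covers that edge case.
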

\begin{proof}
First note that $c(t)=\sum_{0<s\le t}\Delta c(s)$. Further, $c_n\in \D$, the sequence $(c_n)_{n\ge 1}$ is non-decreasing and converges to $c$ uniformly on every finite interval $[0,t]$.

Case 1: $\sigma >0$.
Suppose that $\sigma <\infty$ and define $\eta:=\Delta c(\sigma)- (\ov{x}(\sigma-)-x(\sigma-))>0$. Let $0<\epsilon <\eta$ be arbitrary. In particular, $\Delta c(\sigma)>\eta >\epsilon$. Choose $n_0\in \N$ such that $1/n_0 <\epsilon$ and for all $n\ge n_0$ it holds
$$
c(\sigma)-c_n(\sigma)=\sum_{0<s\le \sigma}\Delta c(s) \I_{\left(\Delta c(s) \le \frac1n\right)}<\frac{\epsilon}{4}\, .
$$
Note that for  $n\ge n_0$, the function $c_n$ has a jump at $\sigma$: $\Delta c_n(\sigma)=\Delta c(\sigma)$. Further, on $[0,\sigma]$ we have
$$
x-\frac{\epsilon}{4}=y+c-\frac{\epsilon}{4}<y+c_n=x_n\le x\, .
$$
If $ s  \in [0,\sigma)$ is such that $x(s) > \ov{x}(\sigma-)-\epsilon/4$, then $x_n(s)>x(s )-\epsilon/4>\ov{x}(\sigma-)-\epsilon/2$, which implies
\begin{equation}\label{e:cont-sigma-6}
\ov{x}(\sigma-)-\frac{\epsilon}{2} \le \ov{x}_n(\sigma-) \le \ov{x}(\sigma-)\, .
\end{equation}
It also holds that
$$
x(\sigma-)-\frac{\epsilon}{4}\le x_n(\sigma-) \le x(\sigma-)\, .
$$
From the last two displays we see that for $n\ge n_0$
$$
\ov{x}(\sigma-)-x(\sigma-)-\frac{\epsilon}{2}<\ov{x}_n(\sigma-)-x_n(\sigma-)<\ov{x}(\sigma-)-x(\sigma-)+\frac{\epsilon}{2}\, .
$$
In particular, for all $n\ge n_0$
\begin{eqnarray*}
\Delta c_n(\sigma)-\big(\ov{x}_n(\sigma-)-x_n(\sigma-)\big)&=&\Delta c(\sigma)-\big(\ov{x}_n(\sigma-)-x_n(\sigma-)\big)\\
&>&\Delta c(\sigma)-\big(\ov{x}(\sigma-)-x(\sigma-)+\frac{\epsilon}{2}\big)\\
&>&\frac{\epsilon}{2}\, .
\end{eqnarray*}
This means that a new supremum of $x_n$ is reached by a jump of $c_n$ at time $\sigma$. Therefore, $\sigma_n\le \sigma$.

To prove the opposite inequality, suppose that $0<t<\sigma$. Together with the assumption \eqref{e:strict-overshoot-2} this implies that $\Delta c(t)- (\ov{x}(t-)-x(t-))<0$. Set $\epsilon=-\big(\Delta c(t)- (\ov{x}(t-)-x(t-))\big)$. The same argument as above implies that for all $n$ large enough it holds
$$
\Delta c_n(t)-\big(\ov{x}_n(t-)-x_n(t-)\big)<-\frac{\epsilon}{2}\, .
$$
This means that $\sigma_n>t$. We conclude that $\sigma_n \ge \sigma$. Note that this argument is valid also in the case when $\sigma=\infty$. Together with the first part of the proof this implies  $\sigma_n=\sigma$ for all sufficiently large $n$. In particular, $\sigma=\lim_{n\to \infty}\sigma_n$.

Suppose that $\sigma<\infty$. Since $\epsilon$ is arbitrary, it follows from \eqref{e:cont-sigma-6} that $\ov{x}(\sigma-)=\lim_{n\to \infty}\ov{x}_n(\sigma_n-)$. If $\sigma=\infty$, then $\ov{x}(\infty)=\lim_{n\to \infty}\ov{x}_n(\infty)$.

Case 2: $\sigma=0$. We claim that $\limsup_{n\to \infty}\sigma_n=0$. Suppose not, and let $\delta:=\limsup_{n\to \infty}\sigma_n>0$. Since $\sigma=0$, there exists $t\in (0,\delta/2)$ such that $\eta:=\Delta c(t)- (\ov{x}(t-)-x(t-))>0$. Let $0<\epsilon <\eta$ be arbitrary. Now we follow the first part of the proof of Case 1 replacing $\sigma$ by $t$ to conclude that there exists $n_0\in \N$ such that for all $n\ge n_0$
$$
\Delta c_n(t)-\big(\ov{x}_n(t-)-x_n(t-)\big)>\frac{\epsilon}{2}\, .
$$
It follows that $\sigma_n\le t<\delta/2$ for all $n\ge n_0$ which is a contradiction with $\limsup_{n\to \infty}\sigma_n=\delta$. Since $\ov{x}_n(\sigma_n-)\le \ov{x}(\sigma_n-)$, see \eqref{e:cont-sigma-6}, we conclude that
$$
\limsup_{n\to \infty}\ov{x}_n(\sigma_n-)\le \limsup_{n\to \infty}\ov{x}(\sigma_n-)=0\, .
$$
\end{proof}

Let $C=(C_t)_{t\ge 0}$ be a subordinator (without drift) with infinite L\'evy measure. For $n\ge 1$ define the process $C^{(n)}=(C^{(n)}_t)_{t\ge 0}$ by
\begin{equation}\label{e:approx-sub}
C^{(n)}_t:=\sum_{0<s\le t}\Delta C(s) \I_{\left(\Delta C(s)>\frac1n\right)}\, .
\end{equation}
Clearly, $C^{(n)}$ is a subordinator (without drift) with the finite L\'evy measure $\nu_n:=\nu_{|\left(\frac1n,\infty\right)}$.
\begin{prop}\label{p:approx-2}
Assume that $Y$ is a L\'evy process and $C$ an independent subordinator with infinite L\'evy measure. Define $X:=Y+C$, $X^{(n)}:=Y+C^{(n)}$, $n\ge 1$, and let
$$
\sigma^{(n)}:=\inf\{t>0:\, \Delta  C^{(n)}_t > \ov{X}^{(n)}_{t-}-X^{(n)}_{t-}\}\, .
$$
If
\begin{equation}\label{e:approx-2-b}
\sup_{0\le t<\infty}Y_t \stackrel{d}{=}\sup_{0\le t <\sigma^{(n)}}X^{(n)}_t \qquad \text{for all }n\ge1 \, ,
\end{equation}
then also
$$
\sup_{0\le t<\infty}Y_t \stackrel{d}{=}\sup_{0\le t <\sigma}X_t\,  ,
$$
where $\sup_{0\le t <\sigma}X_t:=0$ in case $\sigma=0$.
\end{prop}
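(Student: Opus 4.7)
The plan is to mirror the argument of Proposition \ref{p:approx-1}, replacing Lemma \ref{l:approx-CPP} by Lemma \ref{l:approx-general}. Since $Y$ and $C$ are independent, I assume without loss of generality that they are realized on a common probability space $(\Omega,\FF,\P)$. For each $\omega$ outside a $\P$-null set, I would apply Lemma \ref{l:approx-general} pathwise with $y=Y(\omega)$, $c=C(\omega)$, and $c_n=C^{(n)}(\omega)$. With this identification the objects $\sigma_n$ and $\sigma$ from the lemma coincide with the $\sigma^{(n)}$ and $\sigma$ of the proposition, and $\ov{x}_n(\sigma_n-)$, $\ov{x}(\sigma-)$ coincide with $\sup_{0\le t<\sigma^{(n)}}X^{(n)}_t$ and $\sup_{0\le t<\sigma}X_t$ respectively (using the convention $\ov{x}(0-)=0$, which matches the convention $\sup_{0\le t<\sigma}X_t:=0$ on $\{\sigma=0\}$ in the statement).

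Next I would verify, $\P$-a.s., the hypotheses of Lemma \ref{l:approx-general}. Paths of $Y$ are in $\D$, and paths of $C$ are non-decreasing càdlàg with no continuous part (subordinator without drift), so $c(t)=\sum_{0<s\le t}\Delta c(s)$. Since $Y$ and $C$ are independent Lévy processes, with probability one they share no jump times, so $\Delta y(s)\Delta c(s)=0$ for all $s>0$, i.e.~\eqref{e:no-sim-jumps-2} holds a.s. The delicate hypothesis is \eqref{e:strict-overshoot-2}, namely that $\Delta C(t)\neq \ov{X}(t-)-X(t-)$ a.s.~for every $t>0$. Since $C$ has infinite Lévy measure, so does $X=Y+C$, and this property follows from \cite[Proposition VI 4]{Ber} by exactly the same reasoning already used in the proof of Proposition \ref{p:approx-1}.

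With the hypotheses verified, Lemma \ref{l:approx-general} gives $\sigma^{(n)}\to \sigma$ and $\ov{X}^{(n)}(\sigma^{(n)}-)\to \ov{X}(\sigma-)$ almost surely. In particular
$$
\sup_{0\le t<\sigma^{(n)}}X^{(n)}_t \;\longrightarrow\; \sup_{0\le t<\sigma}X_t\qquad \P\text{-a.s.},
$$
and hence also in distribution. Combining this with the standing assumption \eqref{e:approx-2-b}, which says that each $\sup_{0\le t<\sigma^{(n)}}X^{(n)}_t$ is distributed like $\sup_{0\le t<\infty}Y_t$, passage to the limit yields $\sup_{0\le t<\infty}Y_t\stackrel{d}{=}\sup_{0\le t<\sigma}X_t$.

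The main conceptual obstacle, namely the pathwise stability of $\sigma^{(n)}$ and the associated supremum under the approximation $C^{(n)}\to C$, has been isolated in Lemma \ref{l:approx-general}, so once the hypotheses \eqref{e:no-sim-jumps-2} and \eqref{e:strict-overshoot-2} are checked almost surely, the proof reduces to routine bookkeeping. The only point requiring genuine input from Lévy process theory is the verification of \eqref{e:strict-overshoot-2}, and this is precisely where the hypothesis that $C$ has infinite Lévy measure enters.
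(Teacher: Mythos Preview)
Your proposal is correct and follows essentially the same approach as the paper: verify the hypotheses \eqref{e:no-sim-jumps-2} and \eqref{e:strict-overshoot-2} of Lemma~\ref{l:approx-general} almost surely (the latter via \cite[Proposition~VI~4]{Ber}, using that $X$ has infinite L\'evy measure because $C$ does), then apply the lemma to obtain almost sure convergence of $\sup_{0\le t<\sigma^{(n)}}X^{(n)}_t$ and combine with \eqref{e:approx-2-b}. Your write-up is in fact more explicit than the paper's, which simply refers back to the proof of Proposition~\ref{p:approx-1} for the verification of the hypotheses.
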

\begin{proof}
In the same way as in the proof of Proposition \ref{p:approx-1} we see that the assumptions \eqref{e:no-sim-jumps-2} and \eqref{e:strict-overshoot-2} hold for a.e.~$\omega \in \Omega$. It follows from Lemma \ref{l:approx-general} that
$$
\sup_{0\le t <\sigma^{(n)}}X^{(n)}_t \to \sup_{0\le t <\sigma}X_t \quad \textrm{a.s.}
$$
Together with the assumption \eqref{e:approx-2-b} this proves the claim.
\end{proof}

\begin{remark}\label{r:approx-2}
{\rm
Note that if $Y$ and $C$ in Proposition \ref{p:approx-2} are such that $\sigma=0$ a.s.~and $\sup_{t\ge 0}Y_t$ is not identically zero, then \eqref{e:approx-2-b} cannot hold. We will come to this again at the end of Section \ref{sec-4}.
}
\end{remark}

\section{The general case}\label{sec-4}
Let $Y=(Y_t)_{t\ge 0}$ be a spectrally positive L\'evy process such that $\gamma:=\E Y_1<0$. The characteristic exponent of $Y$ will be denoted by $\Psi_Y$, that is $\E[\exp\{iz Y_t\}]=\exp\{t\Psi_Y(z)\}$. Then
$$
\Psi_Y(z)=-\frac12 az^2 +i\gamma z +\int_{(0,\infty)}\left(e^{izx}-1-izx\right)\, \nu(dx)\, ,
$$
where $a\ge 0$ is the diffusion coefficient. With the centering function $c(x)\equiv 1$, the L\'evy triplet of $Y$ is equal to $(a,\gamma,\nu)$, cf.~\cite[pp.38,39 and p.163]{Sat}. Throughout this section we assume that the L\'evy measure $\nu$ of $Y$ is infinite.

We start by recording the well-known fact that such a process can be approximated by a sequence of spectrally positive L\'evy processes $(Y^{(n)})_{n\ge 0}$ with finite L\'evy measures. This approximation is in the sense of weak convergence of one-dimensional distributions as well as weak convergence in the Skorohod space $\D$.

\begin{lemma}\label{l:approximation}
Let $Y=(Y_t)_{t\ge 0}$ be a spectrally positive L\'evy process such that $\gamma:=\E Y_1<0$. There exists a sequence $(Y^{(n)})_{n\ge 1}$ of spectrally positive L\'evy processes with finite L\'evy measures such that $\E Y_1^{(n)}=\gamma$, $Y_1^{(n)}\Rightarrow Y_1$ and $Y^{(n)}\Rightarrow Y$ in $\D$.
\end{lemma}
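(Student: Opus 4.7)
The plan is to approximate $Y$ by truncating its small jumps while preserving the mean $\gamma$ via the centering function $c\equiv 1$. Using the L\'evy--It\^o decomposition associated with the triplet $(a,\gamma,\nu)$,
\[
Y_t = \sqrt{a}\, B_t + \gamma t + M_t,
\]
where $M$ is the compensated sum of all jumps (well-defined since $\int_{(0,1]}x^2\,\nu(dx)<\infty$ and $\int_{[1,\infty)}x\,\nu(dx)<\infty$, the latter being a consequence of $\E|Y_1|<\infty$), I would set
\[
Y^{(n)}_t := \sqrt{a}\, B_t + \gamma t + \sum_{0<s\le t}\Delta Y_s\,\I_{(\Delta Y_s>1/n)} - t\int_{(1/n,\infty)}x\,\nu(dx).
\]
Since $\nu(1/n,\infty)<\infty$, the L\'evy measure of $Y^{(n)}$ is finite; $Y^{(n)}$ is spectrally positive because $\nu$ is concentrated on $(0,\infty)$; and its triplet $(a,\gamma,\nu|_{(1/n,\infty)})$ with centering $c\equiv 1$ yields $\E Y_1^{(n)}=\gamma$ directly.

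Next I would verify that $Y^{(n)}\to Y$ uniformly on compacts in probability. The residual process
\[
R^{(n)}_t := Y_t - Y^{(n)}_t = \sum_{0<s\le t}\Delta Y_s\,\I_{(\Delta Y_s\le 1/n)} - t\int_{(0,1/n]}x\,\nu(dx)
\]
is an $L^2$ martingale with $\E(R^{(n)}_t)^2 = t\int_{(0,1/n]}x^2\,\nu(dx)$, and this variance tends to $0$ by dominated convergence. Doob's $L^2$ maximal inequality then gives, for each fixed $t>0$,
\[
\E\Bigl[\sup_{s\le t}(R^{(n)}_s)^2\Bigr]\le 4t\int_{(0,1/n]}x^2\,\nu(dx)\to 0.
\]
Uniform convergence on compacts in probability implies convergence in probability in the Skorohod $J_1$-topology, hence $Y^{(n)}\Rightarrow Y$ in $\D$; the convergence $Y_1^{(n)}\Rightarrow Y_1$ then follows (alternatively, directly from $L^2$ convergence of $Y_1^{(n)}$ to $Y_1$).

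There is no substantial obstacle here: this is a standard small-jump truncation argument. The only point requiring a little care is to ensure that the first moment is preserved exactly for every $n$, and this is arranged automatically by working throughout with the L\'evy triplet under the centering $c\equiv 1$, so that the drift coefficient $\gamma$ coincides with the mean of $Y_1$ and of each $Y_1^{(n)}$.
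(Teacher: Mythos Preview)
Your argument is correct as a proof of the lemma exactly as stated: the truncated processes $Y^{(n)}$ are spectrally positive L\'evy processes with finite L\'evy measure and mean $\gamma$, and the $L^2$ martingale estimate on the remainder yields uniform convergence on compacts in probability, hence $J_1$-convergence and the two weak convergence conclusions.

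However, your construction differs from the paper's in one respect that matters for the downstream use of the lemma. You keep the Gaussian part $\sqrt{a}\,B$ in every $Y^{(n)}$, so your approximants have triplet $(a,\gamma,\nu|_{(1/n,\infty)})$. The paper instead takes $Y^{(n)}$ with triplet $(0,\gamma,\nu_n)$ where $\nu_n=\nu|_{(1/n,\infty)}+\tfrac{a}{x_n^2}\delta_{x_n}$ for some sequence $x_n\downarrow 0$: the diffusion coefficient is removed and compensated by a growing atom at $x_n$, and convergence is verified via pointwise convergence of characteristic exponents together with a standard functional limit theorem for L\'evy processes. The payoff of the paper's route is that each $Y^{(n)}$ has \emph{no} Gaussian part and therefore can be written as $Y^{(n)}_t=-c^{(n)}t+Z^{(n)}_t$ with $Z^{(n)}$ a driftless subordinator of finite L\'evy measure; this is exactly the form assumed in the compound Poisson case of Theorem~\ref{t:main} in Section~\ref{sec-2}, and the proof of the general case relies on that. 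Your $Y^{(n)}$ cannot be put in this form when $a>0$, so while your argument proves the lemma, it does not deliver approximants that plug into the rest of the paper. The advantage of your approach is that it gives an almost-sure coupling (indeed UCP convergence) rather than mere weak convergence, and it avoids the detour through characteristic exponents.
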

\begin{proof}
Let $(a,\gamma,\nu)$ be the L\'evy triplet of $Y$ (with the centering function $c(x)\equiv 1$). Let $(x_n)_{n\ge 1}$ be a sequence in $(0,1)$ such that $\lim_{n\to \infty} x_n=0$. For $n\ge 1$ we let $Y^{(n)}$ be the L\'evy process with the triplet $(0,\gamma, \nu_n)$ (again with the centering function $c(x)\equiv 1$) where
$$
\nu_n:=\nu_{|(\frac1n,\infty)}+\frac{a}{x_n^2}\delta_{x_n}\, .
$$
Clearly, $\nu_n$ is a finite measure concentrated on $(0,\infty)$ and $\E Y_1^{(n)}=\gamma$. It is straightforward to check that the characteristic exponent $\Psi_{Y^{(n)}}$ converges pointwise to $\Psi_Y$. This is clearly equivalent to the weak convergence $Y_1^{(n)}\Rightarrow Y_1$. Finally, the weak convergence of processes $Y^{(n)}\Rightarrow Y$ follows from \cite[Corollary~VII 3.6]{JS}.
\end{proof}
The approximating process $Y^{(n)}$ can be written in the form $Y^{(n)}_t=-c^{(n)}t+Z^{(n)}_t$ where $c^{(n)}>0$, $Z^{(n)}$ is a subordinator with finite L\'evy measure and no drift, and $-c^{(n)}+\E Z^{(n)}_1=\gamma<0$.

\begin{prop}\label{p:approx-sup}
Let $Y$ and $(Y^{(n)})_{n\ge 1}$ be as in Lemma \ref{l:approximation}. Then
$$
\sup_{0\le t <\infty} Y^{(n)}_t \Rightarrow \sup_{0\le t<\infty}Y_t\, .
$$
\end{prop}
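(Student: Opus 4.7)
My approach is via convergence of Laplace transforms. For any spectrally positive L\'evy process $W$ with $\gamma_W := \E W_1 < 0$, the overall supremum $\ov W_\infty := \sup_{t\ge 0} W_t$ is a.s.\ finite, and a classical Wiener--Hopf computation gives
\begin{equation*}
\E\bigl[e^{-q\,\ov W_\infty}\bigr] \;=\; \frac{(-\gamma_W)\,q}{\psi_W(q)},\qquad q>0,
\end{equation*}
where $\psi_W(q) := \log \E[e^{-qW_1}]$ is the Laplace exponent of the spectrally negative dual $-W$; this is essentially the Pollaczek--Khintchine type identity mentioned in the introduction (cf.~\cite[Ch.~VII]{Ber}, applied to $-W$ which drifts to $+\infty$, via $\sup_t W_t = -\inf_t(-W_t)$). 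Since $Y$ and each $Y^{(n)}$ share the same mean $\gamma<0$ by Lemma~\ref{l:approximation}, this identity reduces the proposition to the pointwise convergence $\psi_{Y^{(n)}}(q) \to \psi_Y(q)$ for every fixed $q>0$; the continuity theorem for Laplace transforms of nonnegative random variables then yields weak convergence of the suprema.

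The convergence of Laplace exponents is a direct computation from the explicit triplet in Lemma~\ref{l:approximation}. Since $Y^{(n)}$ has L\'evy triplet $(0,\gamma,\nu_n)$ with $\nu_n = \nu|_{(1/n,\infty)} + (a/x_n^2)\,\delta_{x_n}$,
\begin{equation*}
\psi_{Y^{(n)}}(q) \;=\; -\gamma q + \int_{1/n}^{\infty}\!\bigl(e^{-qx}-1+qx\bigr)\,\nu(dx) + \frac{a}{x_n^2}\bigl(e^{-qx_n}-1+qx_n\bigr).
\end{equation*}
The integrand $e^{-qx}-1+qx$ is nonnegative and $\nu$-integrable on $(0,\infty)$: it is $O(x^2)$ near $0$ (controlled by $\int(x^2\wedge 1)\,\nu(dx)<\infty$) and $O(x)$ at infinity (controlled by $\int_1^\infty x\,\nu(dx)<\infty$, which holds because $\E Y_1$ is finite). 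Hence monotone convergence gives convergence of the integral to $\int_0^\infty(e^{-qx}-1+qx)\,\nu(dx)$. For the last summand, writing $\phi(x) := (e^{-qx}-1+qx)/x^2$, a Taylor expansion yields $\phi(x) \to q^2/2$ as $x\to 0^+$, so $(a/x_n^2)(e^{-qx_n}-1+qx_n) \to aq^2/2$. Summing, $\psi_{Y^{(n)}}(q) \to \tfrac12 a q^2 - \gamma q + \int_0^\infty(e^{-qx}-1+qx)\,\nu(dx) = \psi_Y(q)$, which finishes the proof.

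The principal ingredient is the closed-form Laplace transform of $\ov W_\infty$; this is standard for spectrally positive L\'evy processes drifting to $-\infty$ but relies on the Wiener--Hopf factorization. An alternative route avoiding this formula would combine the functional convergence $Y^{(n)} \Rightarrow Y$ in $\D$ (which, by continuous mapping, gives $\sup_{t\le T} Y^{(n)}_t \Rightarrow \sup_{t\le T} Y_t$ for any $T$ at which $Y$ is a.s.\ continuous) with a uniform-in-$n$ tail estimate of the form $\P(\sup_{t\ge T} Y^{(n)}_t > -M)\to 0$ as $T,M\to\infty$, derived from the common drift $\gamma<0$. I expect this alternative to be more technical than the Laplace transform argument, which is the natural and cleanest path.
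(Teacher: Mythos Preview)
Your proof is correct and follows essentially the same route as the paper: both use the closed-form Laplace transform $\E[e^{-\lambda \ov{Y}_\infty}] = -\gamma\,\lambda/\psi_Y(\lambda)$ for a spectrally positive L\'evy process with negative mean, and then reduce the claim to the pointwise convergence $\psi_{Y^{(n)}}(\lambda)\to \psi_Y(\lambda)$. The only cosmetic difference is that the paper invokes the convergence $\Psi_{Y^{(n)}}\to\Psi_Y$ already established in the proof of Lemma~\ref{l:approximation} (noting $\psi(\lambda)=\Psi_Y(i\lambda)$), whereas you verify the convergence of the Laplace exponents directly from the explicit triplets.
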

\begin{proof}
Let $\psi$ denote the Laplace exponent of the spectrally negative dual process $\wh{Y}=-Y$ in the sense of \cite[p.188]{Ber}. Then $\psi(\lambda)=\Psi_Y(i\lambda)$, $\lambda \ge 0$. It is a straightforward consequence of \cite[VII (3), p.192]{Ber} that
$$
\E\left[\exp\left\{\lambda \inf_{0\le t<\infty}\wh{Y}_t\right\}\right]=-\gamma \frac{\lambda}{\psi(\lambda)}\, ,\quad \lambda >0\, ,
$$
(cf.~also \cite[(3.1)]{HPSV04a}). In terms of the process $Y$ this reads as
$$
\E\left[\exp\left\{-\lambda \sup_{0\le t<\infty}Y_t\right\}\right]=-\gamma \frac{\lambda}{\Psi_Y(i\lambda)}\, ,\quad \lambda >0\, .
$$
The same relation holds for the approximating processes:
$$
\E\left[\exp\left\{-\lambda \sup_{0\le t<\infty}Y^{(n)}_t\right\}\right]=-\gamma \frac{\lambda}{\Psi_{Y^{(n)}}(i\lambda)}\, ,\quad \lambda >0\, .
$$
Since $\Psi_{Y^{(n)}}\to \Psi_Y$ pointwise, we see that the Laplace transforms of $\sup_{0\le t<\infty}Y^{(n)}_t$ converge to the Laplace transform of $\sup_{0\le t<\infty}Y_t$. This proves the claim.
\end{proof}

\medskip
\noindent
\emph{Proof of Theorem \ref{t:main} -- general case.}
Let $Y$ be a spectrally positive L\'evy process with infinite L\'evy measure satisfying $\E Y_1< 0$.

We first consider  an independent subordinator $C$ (without drift) with finite L\'evy measure, and set $X:=Y+C$. By Lemma \ref{l:approximation} there exists a sequence  $(Y^{(n)})_{n\ge 1}$ of spectrally positive L\'evy processes with finite L\'evy measures such that $\E Y_1^{(n)}=\E Y_1$, $Y_1^{(n)}\Rightarrow Y_1$ and $Y^{(n)}\Rightarrow Y$ in $\D$. By Proposition \ref{p:approx-sup} we have $\sup_{0\le t <\infty} Y^{(n)}_t \Rightarrow \sup_{0\le t<\infty}Y_t$. Let $X^{(n)}:=Y^{(n)}+C$. Then \eqref{e:approx-1-b} is true by the proof of Theorem \ref{t:main} for the compound Poisson case given in Section \ref{sec-2}. Now it follows from Proposition \ref{p:approx-1} that $\sup_{0\le t<\infty}Y_t\stackrel{d}{=} \sup_{0\le t<\sigma}X_t$.

In the second step we take an independent subordinator $C$ (without drift) with infinite L\'evy measure and set $X:=Y+C$. For each $n\ge 1$ define the subordinator $C^{(n)}$ by \eqref{e:approx-sub} and let $X^{(n)}:=Y+C^{(n)}$. Then by what has been just proved we have that $\sup_{0\le t<\infty}Y_t\stackrel{d}{=} \sup_{0\le t<\sigma}X^{(n)}_t$. We finish the proof by invoking Proposition \ref{p:approx-2}. \qed

\bigskip

At the end of the paper we discuss briefly how essential is the assumption on spectral positivity of $Y$ for validity of Theorem \ref{t:main}.
Suppose that $Y$ is a general L\'evy process, not necessarily spectrally positive, which is not a sum of negative subordinator and negative drift, and such that $\E Y_1<0$. Then $\sup_{t\ge 0} Y_t$ is finite a.s.~and not identically zero. Let $C$ be an independent subordinator without drift, define  $X=Y+C$, and assume that $\sigma=\inf\{t> 0:\, \Delta C_t>\ov{X}_{t-}-X_{t-}\}=0$. Then clearly $\sup_{0\le t <\sigma}X_t=0$ and hence cannot be equal in distribution to $\sup_{t\ge 0} Y_t$. Moreover, if $C^{(n)}$ is a sequence of subordinators defined as in \eqref{e:approx-sub}, $X^{(n)}=Y+C^{(n)}$ and $\sigma^{(n)}=\inf\{t>0:\, \Delta C^{(n)}_t>\ov{X}^{(n)}_{t-}-X^{(n)}_{t-}\}$, then $\sigma^{(n)}>0$ and according to Remark \ref{r:approx-2}, it cannot hold that
$$
\sup_{0\le t<\infty}Y_t \stackrel{d}{=}\sup_{0\le t <\sigma^{(n)}}X^{(n)}_t \qquad \text{for all }n\ge1 \, .
$$
This means that there exists a subordinator $\wt{C}$ with finite L\'evy measure, independent of $Y$ such that if $\wt{X}=Y+\wt{C}$ and $\wt{\sigma}=\inf\{t> 0:\, \Delta \wt{C}_t>\ov{\wt{X}}_{t-}-\wt{X}_{t-}>0\}$, then $\sup_{t\ge 0}Y_t$ and $\sup_{0\le t<\wt{\sigma}}\wt{X}_t$ have different distributions. Hence Theorem \ref{t:main} cannot hold for $Y$ even in case of subordinators with finite L\'evy measure.

The necessary and sufficient condition for $\sigma=0$ was given in \cite[Theorem 2.1]{SV08}. Let $Y$ be a L\'evy process of unbounded variation, let $C$ be an independent subordinator with the L\'evy measure $\nu$, and let $X=Y+C$. Denote by $V(x)$ the renewal function of the descending ladder height process of $Y$ (i.e.~the ascending ladder height of the dual process $-Y$), cf.~\cite[III 1 and VI 1 ]{Ber}. Then $\sigma>0$ a.s.~if and only if
\begin{equation}\label{e:cond-sigma-f}
\int_0^1 V(x)\, \nu(dx)<\infty\, .
\end{equation}
In case $Y$ is spectrally positive, it holds that $V(x)=x$, hence \eqref{e:cond-sigma-f} is automatically satisfied because of integrability property of $\nu$. More generally, $Y$ creeps downwards (i.e.~with positive probability crosses every level from above continuously), if and only if there exists a constant $c>0$ such that $V(x)\le cx$ for all $x\ge 0$, see \cite[Theorem VI 19, pp.~174-175]{Ber}.

Assume that $Y$ is of unbounded variation such that it is not true that $V(x)\le cx$ for $x\in [0,1]$ for any constant $c>0$. Then it follows that $\lim_{x\to 0+}V(x)/x=+\infty$. Let
$$
\beta(x):=\inf_{0<t\le x}\frac{V(t)}{t}\, .
$$
Then $\beta$ is non-increasing and $\lim_{x\to 0+}\beta(x)=+\infty$. Denote by $\beta(dx)$ the measure on $(0,1)$ corresponding to the function $\beta$ and define
$$
\wt{\nu}(dx):=\frac{\beta(dx)}{\beta(x)^2}\, .
$$
By a change of variable it is easy to see that
$$
\int_0^1 \beta(x)\wt{\nu}(dx)=\int_0^1 \frac{\beta(dx)}{\beta(x)}=\infty \qquad \text{and} \qquad \int_0^1 \wt{\nu}(dx)=\int_0^1 \frac{\beta(dx)}{\beta(x)^2}<\infty \, .
$$
Finally, let $\nu(dx):=\wt{\nu}(dx)/x$. Then
$$
\int_0^1 V(x)\, \nu(dx)\ge \int_0^1 \beta(x)\, \wt{\nu}(dx)=\infty \qquad \text{and} \qquad \int_0^1 x\, \nu(dx) =\int_0^1\wt{\nu}(dx)<\infty\, .
$$
Hence $\nu$ is a L\'evy measure such that \eqref{e:cond-sigma-f} is not satisfied. If $C$ is a subordinator independent of $Y$ with  L\'evy measure $\nu$, then $\sigma=0$ a.s. We conclude that Theorem \ref{t:main} cannot hold for the L\'evy process $Y$.

To summarize, if $Y$ is of unbounded variation, $\E Y_1<0$, and $Y$ does not creep downwards, then there exists a subordinator $C$ with finite L\'evy measure such that $\sup_{t\ge 0}Y_t$ and $\sup_{0\le t<\sigma}X_t$ have different distributions.

\medskip
\noindent
{\bf Acknowledgement:} We thank the referees for careful reading of the paper and helpful remarks. This work has been supported in
part by Croatian Science Foundation under the project 3526.

\small

\bigskip

{\bf Ivana Ge\v{c}ek Tu{\dj}en}

Department of Mathematics, University of Zagreb, Zagreb, Croatia

Email: \texttt{igecek@math.hr}

\bigskip

{\bf Zoran Vondra{\v{c}}ek}

Department of Mathematics, University of Zagreb, Zagreb, Croatia

Email: \texttt{vondra@math.hr}

\begin{thebibliography}{99}
\bibitem{Ber}
    J.~Bertoin:
    \emph{L\'evy Processes}. Cambridge University Press, Cambridge 1996.

\bibitem{Bil}
    P.~Billingsley:
    \emph{Convergence of Probability Measures}, $2^{\textrm{nd}}$ edition, John Wiley \& Sons 1999.

\bibitem{HPSV04a}
    M.~Huzak, M.~Perman, H.~\v{S}iki\'{c}, Z.~Vondra\v{c}ek:
    Ruin probabilities and decompositions for general perturbed risk processes.
    \emph{Ann.~Appl.~Probab.} \textbf{14} (2004) 1278--1397.

\bibitem{HPSV04b}
    M.~Huzak, M.~Perman, H.~\v{S}iki\'{c}, Z.~Vondra\v{c}ek:
    Ruin probabilities for competing claim processes.
    \emph{J.~Appl.~Probab.} \textbf{41} (2004) 679--690.

\bibitem{JS}
    J.~Jacod, A.~N.~Shiryaev:
    \emph{Limit Theorems for Stochastic Processes}, $2^{\textrm{nd}}$ edition, Springer 2003.

\bibitem{Sat}
    K.~Sato: \emph{ L\'evy Processes and Infinitely Divisible Distributions}.
    Cambridge University Press, Cambridge, 1999.

\bibitem{SV08}
    R.~Song, Z.~Vondra\v{c}ek:
    On suprema of L\'evy processes and application in risk theory.
    \emph{Ann.~Inst.~Henri Poincar\'e Probab.~Stat.}  \textbf{44}  (2008) 977–-986.

\bibitem{Tak}
    L.~Tak\'acs:
    \emph{Combinatorial Methods in the Theory of Stochastic Processes}. Krieger, Huntington, New York (1977).

\end{thebibliography}
\end{document}